\documentclass{amsart}

\usepackage{amsmath,amsthm,amssymb,mathtools,amscd,color,comment}
\usepackage{tikz}
\usetikzlibrary{intersections, calc, arrows.meta}
\usepackage[all]{xy}
\usepackage{enumerate}
\usepackage{url}
\usepackage{setspace}
\allowdisplaybreaks

\theoremstyle{definition}
\newtheorem{theorem}{Theorem}[section]
\newtheorem{proposition}[theorem]{Proposition}
\newtheorem{lemma}[theorem]{Lemma}
\newtheorem{corollary}[theorem]{Corollary}

\newtheorem{problem}[theorem]{Problem}

\newtheorem{definition}[theorem]{Definition}

\theoremstyle{remark}
\newtheorem{remark}[theorem]{Remark}
\numberwithin{equation}{section}

\newcommand{\T}{{\mathrm T}}
\newcommand{\R}{{\mathbb R}}
\newcommand{\Z}{{\mathbb Z}}
\newcommand{\N}{{\mathbb N}}
\newcommand{\C}{{\mathbb C}}

\newcommand{\CP}{\mathbb{CP}}
\newcommand{\D}{\Delta^{d-1}}

\newcommand{\Pol}{\mathrm{Pol}}

\newcommand{\Hom}{\mathrm{Hom}}

\newcommand{\TFF}{\mathrm{TFF}}

\newcommand{\order}{\mathcal{O}}

\title{Explicit Construction of Spherical $5$- and $7$-Designs}

\author[R.~Misawa]{Ryutaro Misawa}
\address[R.~Misawa]{Graduate School of Information Sciences\\
 Tohoku University\\
6-3-09 Aramaki-Aza-Aoba, Aoba-ku, Sendai 980-8579\\
	 Japan}
\email{misawa.ryutaro.q2@dc.tohoku.ac.jp}
\begin{document}

\begin{abstract}
This paper develops an explicit and implementable framework for constructing spherical designs by lifting point sets from tight fusion frames.
By combining existing ingredients, we obtain, in every dimension, explicit spherical $5$-designs with $|X|=\order(d^3)$.
As a core component of the method, we give an explicit construction of simplex $3$-designs realized as orbits of the symmetric group.
Using these simplex designs as input, we further construct spherical $7$-designs in arbitrary even dimensions; more precisely, for every even integer $d\ge 6$ we obtain spherical $7$-designs in dimension $d$, and if $\frac{d}{2}-1$ is a prime power then the number of points is $\order(d^6)$.

\end{abstract}

\maketitle

\section{Introduction}
Spherical $t$-designs are fundamental objects appearing in approximation theory, algebraic combinatorics, and related areas (see, e.g., \cite{ACSW2010, BB2009}).
A finite subset $X$ of the unit sphere 
\[S^{d-1}:=\{x\in \R^d \mid || x||=1\}\]
is called a (weighted) spherical $t$-design if, for every function of total degree at most $t$, the (weighted) average of its values on $X$ coincides with the integral with respect to the normalised surface measure $\sigma$.
When all weights are equal, we simply call it a spherical $t$-design; when arbitrary positive real weights are allowed, we call it a weighted spherical $t$-design.

Spherical $t$-designs were introduced by Delsarte--Goethals--Seidel \cite{DGS1977}.
Let $N(d,t)$ denote the minimum cardinality of a spherical $t$-design on $S^{d-1}$.
Using linear programming, Delsarte--Goethals--Seidel~\cite{DGS1977} obtained the following lower bound.
\begin{proposition}[{\cite{DGS1977}}]\label{prop:tight}
Fix natural numbers $d,t$. Then $N(d,t)$ satisfies
\begin{equation}\label{eq:DGS-bound}
    N(d,t)\ge
    \begin{cases}
        \binom{d+\frac{t}{2}-1}{d-1}+\binom{d+\frac{t}{2}-2}{d-1}& \text{if }t\ \text{is even},\\[2mm]
        2\binom{d+\frac{t-1}{2}-1}{d-1}& \text{if }t\ \text{is odd}.
    \end{cases}
\end{equation}
\end{proposition}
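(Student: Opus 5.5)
The plan is the standard Fisher-type (dimension-counting) argument of Delsarte--Goethals--Seidel, organised through the spaces of polynomials restricted to the sphere. Write $\Pol_k$ for the space of polynomial functions of degree at most $k$ restricted to $S^{d-1}$, and $\mathrm{Hom}_k$ for the restrictions of homogeneous polynomials of degree $k$. The key fact I will invoke is the orthogonal decomposition into harmonic spaces:
\[
\dim \Pol_k|_{S^{d-1}} = \dim\mathrm{Hom}_k + \dim \mathrm{Hom}_{k-1} = \binom{d+k-1}{d-1}+\binom{d+k-2}{d-1}.
\]
This holds because $\|x\|^2=1$ on the sphere, so degree-$\le k$ polynomials reduce to homogeneous ones of degree $k$ and $k-1$. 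In addition, $\mathrm{Hom}_k$ restricts injectively to $S^{d-1}$, with dimension $\binom{d+k-1}{d-1}$.

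\emph{Even case $t=2e$.} Let $X$ be a spherical $t$-design. Consider the linear map $\Pol_e|_{S^{d-1}}\to \R^X$ given by restriction. For $f\in\Pol_e$, the function $f^2$ has degree $\le 2e=t$, so the design property gives
\[
\frac1{|X|}\sum_{x\in X} f(x)^2=\int f^2\,d\sigma.
\]
If $f$ vanishes on $X$, this forces $\int f^2\,d\sigma=0$, so $f\equiv 0$ on the sphere. Hence the restriction map is injective, and
\[
|X|\ge \dim\Pol_e|_{S^{d-1}}=\binom{d+e-1}{d-1}+\binom{d+e-2}{d-1},
\]
which is the stated bound.

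\emph{Odd case $t=2e+1$.} Here I will use the antipodal trick. Take $V=\mathrm{Hom}_e|_{S^{d-1}}$ (dimension $\binom{d+e-1}{d-1}$) and the map $f\mapsto (f(x))_{x\in X}$. For $f\in V$, the function $f^2$ has degree $2e\le t$, so the map is injective as before and $|X|\ge \dim V$. To gain the factor $2$, I will instead show injectivity of $V\oplus x_1V$ on $X$, after rotating so that $x_1$ is a direction with suitable properties. The cleaner route is the weighted version: for $g=f_1+\ell f_2$ with $f_i\in V$ and $\ell$ linear, the product $g^2$ has degree up to $2e+2$, which is too large. So I will use the standard DGS route instead. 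Fix $y\in X$ and consider the subset $X'=X\setminus\{\pm\}$. More concretely, I will use the polynomial $(1+\langle x,y\rangle)\,h(x)^2$ with $h\in\Pol_e$. This has degree $\le 2e+1=t$ and is nonnegative on the sphere.

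\emph{Main obstacle.} The hardest step is making the odd case rigorous. I will follow DGS and work with $\Pol_e$ restricted to the "half" $Y=\{x\in X:\langle x,y\rangle\ne -1\}$ using the positive weight $1+\langle x,y\rangle$. This gives injectivity of $\Pol_e|_{S^{d-1}}\to\R^{Y}$, because a nonnegative polynomial of degree $\le t$ with zero average must vanish on the sphere. The count this yields, however, is $\dim\Pol_e|_{S^{d-1}}$ rather than $2\binom{d+e-1}{d-1}$.

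The correct argument instead splits $\R^X$ by parity. Take $A=\mathrm{Hom}_e$ and $B=\mathrm{Hom}_{e}\cdot$ (a generic linear form $\ell$), with $\ell$ chosen nonvanishing on $X$. Then show that the functions $\{f,\ \ell f': f,f'\in \mathrm{Hom}_e\}$ restricted to $X$ span a space of dimension $2\dim\mathrm{Hom}_e$. The tool is the Gram pairing $\sum_X f\,\ell f'$ together with $\sum_X f f'$ and $\sum_X \ell^2 f f'$; here $\ell^2ff'$ has degree $2e+2$, so a bit more care is needed. I expect this step to require the linear programming formulation with Gegenbauer polynomials, applied to the function $F(s)=(1+s)\,(R_e(s))^2$, where $R_e$ is the appropriate Jacobi-type polynomial. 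Evaluating $|X|\ge F(1)/\hat F_0$ will then give exactly $2\binom{d+e-1}{d-1}$, and I will treat this computation as the main technical step.
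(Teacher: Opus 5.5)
Your even case is correct and complete. It is the standard Fisher-type argument, and the dimension count is right because $\mathrm{Hom}_e|_{S^{d-1}}+\mathrm{Hom}_{e-1}|_{S^{d-1}}$ is a direct sum by parity. The paper gives no proof of this proposition; it only cites DGS, so the standard argument is the relevant benchmark.

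The odd case $t=2e+1$, however, is not proved; this is a genuine gap. You abandon each route you start:
\begin{itemize}
\item $V\oplus x_1V$, because of the degree;
\item the weight $1+\langle x,y\rangle$, because it only yields $\dim\Pol_e|_{S^{d-1}}<2\binom{d+e-1}{d-1}$;
\item the parity splitting, because the Gram entry $\ell^2ff'$ has degree $2e+2$.
\end{itemize}
You then defer to a linear-programming computation with an unspecified ``Jacobi-type'' $R_e$ that is never carried out. So the factor $2$ is asserted, not derived.

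The missing observation is that the parity-splitting route needs no Gram matrix. Pick $\ell=\langle\cdot,y\rangle$ with $\ell(x)\neq0$ for all $x\in X$, and suppose $f+\ell f'$ vanishes on $X$ for some $f,f'\in\mathrm{Hom}_e$. Multiply by $f$ and sum over $X$. Since $f^2$ and $\ell ff'$ have degrees $2e$ and $2e+1\le t$, the design property gives $\int f^2\,d\sigma+\int \ell ff'\,d\sigma=0$. The second integral vanishes because $\ell ff'$ is an odd function. Hence $f=0$. Then $\ell f'=0$ on $X$ forces $f'|_X=0$, and your even-case argument applied to $f'^2$ gives $f'=0$. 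Thus $(f,f')\mapsto(f+\ell f')|_X$ is injective on $\mathrm{Hom}_e\oplus\mathrm{Hom}_e$, and $|X|\ge 2\binom{d+e-1}{d-1}$.

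Your LP route also closes, but it too has to be carried out. Take $F(s)=(1+s)K(s)^2$, where $K(\langle x,y\rangle)$ is the zonal reproducing kernel of $\mathrm{Hom}_e|_{S^{d-1}}$, and set $N=\binom{d+e-1}{d-1}$. Then $F(1)=2N^2$, and $\int F(\langle x,y\rangle)\,d\sigma(x)=N$ because the odd part $\langle x,y\rangle K^2$ integrates to zero. The design property in $x$, for each fixed $y$, gives $\sum_{x,y\in X}F(\langle x,y\rangle)=|X|^2N$. Nonnegativity of $F$ on $[-1,1]$ bounds this sum below by its diagonal, $|X|F(1)=2|X|N^2$. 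Hence $|X|\ge 2N$.
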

A spherical $t$-design on $S^{d-1}$ achieving equality in \eqref{eq:DGS-bound} is called a \emph{tight $t$-design}.
On $S^{1}$, it is known that a regular $(t+1)$-gon is a tight $t$-design (see, e.g., \cite{DGS1977}).
However, for $S^{d-1}$ with $d\ge 3$, tight $t$-designs are extremely rare.
Bannai--Damerell~\cite{BannaiDamerell1979,BannaiDamerell1980} showed that the only possible values of $t$ for which tight spherical $t$-designs may exist are
$t\in\{1,2,3,4,5,7,11\}$.
Moreover, classification is complete for $t=1,2,3,11$, whereas the cases $t=4,5,7$ have remained open for a long time (See \cite{BB2009}).

If $t$ is fixed and $d\to\infty$, then the bound \eqref{eq:DGS-bound} suggests that a tight $t$-design, if it exists, should satisfy
\[
|X|=\order\bigl(d^{\lfloor t/2\rfloor}\bigr).
\]
Here and throughout, for functions $f(d)$ and $g(d)$ we write $g(d)=\order(f(d))$ as $d\to\infty$ if there exist constants $C>0$ and $d_0\in\N$ such that
\[
|g(d)|\le C\,|f(d)| \qquad (\forall\, d\ge d_0).
\]
When $t$ is fixed, the constants $C$ and $d_0$ may depend on $t$.
Therefore, a natural goal is to obtain an \emph{explicit construction that is close to this order-optimal behavior}.

\begin{problem}\label{prob:construct_dual}
Fix $t \in \N$. Give an explicit construction of spherical $t$-designs $X_d\subset S^{d-1}$ such that
\[
|X_d|=\order(d^{\lfloor \frac{t}{2}\rfloor}).
\]
\end{problem}

Since the cases $t=1,2,3$ are already well understood from the viewpoint of explicit constructions, we focus on the simplest remaining cases, namely $t=4,5,7$, and summarize the current progress on Problem~\ref{prob:construct_dual}.
A point to keep in mind is that, depending on the literature, statements are often made while mixing spherical $t$-designs and \emph{weighted} spherical $t$-designs.
In particular, in numerical analysis, weighted designs are also called cubature formulas of strength $t$,
and equal-weight designs are sometimes called Chebyshev-type cubature formulas
\cite{Cools1997,Stroud1971,RB1991}.
Below, we restrict attention to families that exist in infinitely many dimensions.


\begin{enumerate}
    \item {$t=4$ and $t=5$.}
Weighted spherical designs have long been studied from the viewpoint of numerical analysis.
In particular, Stroud~\cite{Stroud1971} constructed many weighted designs for small values of $t$.
As a general framework yielding explicit constructions with fewer points in higher dimensions,
Victoir~\cite{Victoir2004} proposed a method using orthogonal arrays, producing weighted spherical $5$-designs with $\order(d^2)$ points.
For equal weights, Bajnok~\cite{Bajnok1991-1} showed that for sufficiently large $N$ one can construct spherical $5$-designs on $S^{d-1}$ with $N$ points.
Baladram~\cite{Baladram-phd} also gave an inductive construction using low-dimensional examples, still of exponential order.
Furthermore, Mohammadpour--Waldron~\cite{MW2013} showed that, assuming the Zauner conjecture, one can construct a spherical $5$-design with $6d^2$ points in even dimensions $d$.
Levenshtein~\cite{Levenshtein1982} constructed a spherical $4$-design with $d(d+1)$ points for $d=4^m$.
Kuperberg~\cite{Kuperberg2006} constructed spherical $5$-designs with $\order(d^2)$ points for $d=2^m$.

    \item{$t=7$.}
Kuperberg \cite{Kuperberg2006HatBox} gave an explicit construction of weighted spherical $7$-designs on $S^{d-1}$ for all $d$, and stated that the number of points is $\order(d^4)$.
On the other hand, compared with the cases $t=4,5$, there is much less general theory for constructing spherical $7$-designs.
Sidelnikov \cite{Sidelnikov1999} constructed examples of exponential order for $d=2^m$.
Later, Kuperberg \cite{Kuperberg2006HatBox} showed that, in dimensions $d$ for which a Hadamard matrix of order $d$ exists, one can construct spherical $7$-designs with $\order(d^6)$ points.
\end{enumerate}

In this paper, we use the following construction scheme, originating in K\"onig \cite{Koenig1999} and Kuperberg \cite{Kuperberg2006}, and further developed in Okuda \cite{Okuda2015}, Lindblad \cite{Lindblad2023}, and Misawa \cite{Misawa2026}.
For integers $d>k$, we choose finitely many $k$-dimensional subspaces $V\subset\R^d$ and place suitable designs on the unit spheres $S(V)\simeq S^{k-1}$ contained in each $V$.
By lifting these configurations to a point set on the unit sphere $S^{d-1}$ of $\R^d$, we obtain spherical designs.
As our main results, within this framework we give explicit constructions primarily of spherical $5$-designs and spherical $7$-designs.
Moreover, by focusing on the case $(d,k)=(2d',2)$ ($d'\in\N$), we combine several related analogues—simplex designs, projective toric designs, and interval designs—and present the constructions in an explicit and implementable form.

In particular, we describe the simplex design, which forms the core component of our method.
In design theory, simplex designs are analogues of spherical designs on a simplex, reformulated in a systematic manner by Baladram \cite{Baladram2018}.
For a positive integer $d$, define the standard simplex in $\R^d$ by
\[
\Delta^{d-1}
:=\Bigl\{(x_1,\dots,x_d)\in\mathbb{R}^d\ \Bigm|\ \sum_{i=1}^d x_i=1,\ \ x_i\ge 0\ (1\le i\le d)\Bigr\}.
\]
A finite subset $X\subset\Delta^{d-1}$ is called a simplex $t$-design if, for every function $f(x)=f(x_1,\dots,x_d)$ of total degree at most $t$, the average of $f$ over $X$ agrees with the integral of $f$ with respect to the normalized $(d-1)$-dimensional Lebesgue measure in $\Delta^{d-1}$. One can pose problems for simplex designs analogous to those for spherical designs, but the construction theory for simplex designs is far less developed.
Indeed, for $t=1$, the gravity point of $\Delta^{d-1}$ gives a simplex $1$-design, and for $t=2$, Baladram \cite{Baladram2018} constructed simplex $2$-designs with $d$ points for every $d$.
However, for $t\ge 3$, no explicit constructions are known in general.
In this paper, we construct simplex $3$-designs using the natural action of the symmetric group of degree $d$, denoted $S_d$.

We summarize the contributions of this paper as follows.

\subsection*{Contributions}
\begin{enumerate}
  \item Toward Problem~\ref{prob:construct_dual}, by combining known constructions we give, for $t=4$ and $t=5$ and for \textbf{every} $d$, explicit spherical $t$-designs with $|X|=\order(d^3)$.
  \item For \textbf{every} $d$, We give an explicit construction of simplex $3$-designs.
  \item For $t=7$, we provide constructions of spherical $7$-designs in arbitrary even dimensions; in particular, for $d=2q$ with $q$ a prime power, we obtain spherical $7$-designs on $S^{d-1}$ with $|X|=\order(d^6)$.

\end{enumerate}

The paper is organized as follows.
In Section~2 we introduce the designs used in our constructions.
In Section~3 we construct spherical $5$-designs.
In Section~4 we construct simplex $3$-designs as orbits of the symmetric group of degree $d$, and then use them to construct spherical $7$-designs.

\section{Preliminaries}
In this section, we introduce the notions of designs needed for our constructions and recall basic properties when necessary.
We begin with spherical designs, allowing positive weights. Let $\sigma$ be the normalized surface measure on $S^{d-1}$.

\begin{definition}\label{def:weighted-sph-design}
Let $t,d\in\N$.
A pair $(X,\lambda)$ consisting of a finite set $X=\{x_1,\dots,x_n\}\subset S^{d-1}$ and positive weights
$\lambda_1,\dots,\lambda_n>0$ is called a \emph{weighted spherical $t$-design} if for every function
$f:\R^d\to\R$ of total degree at most $t$,
\[
\int_{S^{d-1}} f(x)\,d\sigma(x)
=
\frac{1}{\Lambda}\sum_{j=1}^n \lambda_j\, f(x_j)
,
\]
holds, where $\Lambda:=\sum_{j=1}^n \lambda_j$.
\end{definition}

\begin{remark}\label{rem:weight-cases}
In Definition~\ref{def:weighted-sph-design},
\begin{enumerate}
\item[(i)]
If all weights are equal (i.e., $\lambda_1=\cdots=\lambda_n$), then $(X,\lambda)$ is a \emph{spherical $t$-design}.
\item[(ii)]
Allowing general positive real weights $\lambda_j>0$ leads to the notion of a weighted spherical $t$-design.
\end{enumerate}
\end{remark}
Below, as analogues of spherical designs, we define several related notions: interval designs, simplex designs, projective toric designs, and complex projective designs.

\subsection{Interval $t$-design}
\begin{definition}[{\cite[Sec.~1]{Nishimura2003}}]\label{def:interval_t_design}
Let $d\ge2$ and set $w_d(x):=(1-x^2)^{\frac{d-2}{2}}$ on $[-1,1]$.
A finite set $X\subset[-1,1]$ is an \emph{interval $t$-design with respect to $w_d$} if for every polynomial $f$ with
$\deg f\le t$,
\[
\frac{1}{\alpha_d}\int_{-1}^{1} f(x)\,w_d(x)\,dx
=
\frac{1}{|X|}\sum_{x\in X} f(x),
\qquad
\alpha_d:=\int_{-1}^{1} w_d(x)\,dx.
\]
\end{definition}

Using the change of variables $u^2=t$ in the definition of the beta function $B(a,b)=\int_0^1 t^{a-1}(1-t)^{b-1}\,dt$, we obtain
\[
\int_{-1}^1 u^{2m}(1-u^2)^{d-1}\,du
=
B\!\left(m+\tfrac12,d\right)
\qquad(m\in\mathbb Z_{\ge0}).
\]
Hence
\begin{align}
\frac{1}{\alpha_{2d}}\int_{-1}^1 u^{2}(1-u^2)^{d-1}\,du
&=\frac{B(\tfrac32,d)}{B(\tfrac12,d)}
=\frac{1}{2d+1},
\label{eq:interval-moment2}\\
\frac{1}{\alpha_{2d}}\int_{-1}^1 u^{4}(1-u^2)^{d-1}\,du
&=\frac{B(\tfrac52,d)}{B(\tfrac12,d)}
=\frac{3}{(2d+1)(2d+3)}.
\label{eq:interval-moment4}
\end{align}

\subsection{Simplex $t$-design}

Let $\rho$ be the normalized $(d-1)$-dimensional Lebesgue measure in $\Delta^{d-1}$.
\begin{definition}[{\cite[Eq.~(1.1)]{Baladram2018}}]\label{def:simplex_t_design}
A finite set $X\subset\Delta^{d-1}$ is called a \emph{simplex $t$-design} if
\[
\int_{\Delta^{d-1}} f(x)\,d\rho(x)
\;=\;
\frac{1}{|X|}\sum_{x\in X} f(x)
\]
holds for every polynomial $f(x)=f(x_1,\dots,x_d)$ of total degree at most $t$.
\end{definition}
The multivariate beta function is defined by
\[
B(\alpha_1,\dots,\alpha_d)
:=\frac{\prod_{i=1}^d \Gamma(\alpha_i)}{\Gamma\!\Bigl(\sum_{i=1}^d \alpha_i\Bigr)},
\qquad (\alpha_i>0),
\]
where the gamma function $\Gamma$ is given by
\[
\Gamma(\alpha):=\int_{0}^{\infty} t^{\alpha-1}e^{-t}\,dt,
\qquad (\alpha>0).
\]
By linearity, it suffices to verify Definition~\ref{def:simplex_t_design} for monomials
$x_1^{k_1}\cdots x_d^{k_d}$ with $k_1+\cdots+k_d\le t$.
Moreover, the integral representation of $B$ over the simplex
(see \cite[Eq.~(2.2)]{Baladram2018}) yields the following moment formula.

\begin{proposition}[{\cite[Eq.~(2.2) \& Eq.~(2.3)]{Baladram2018}}]
\label{prop:simplex-moment}
Let $d\in\Z_{>0}$.
Then for any nonnegative integers $k_1,\dots,k_d$,
\[
\int_{\Delta^{d-1}} x_1^{k_1}x_2^{k_2}\cdots x_d^{k_d}\,d\rho(x)
=
(d-1)!\,B(k_1+1,\dots,k_d+1).
\]
\end{proposition}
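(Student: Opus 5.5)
The plan is to reduce the statement to the classical Dirichlet integral and then match normalizations. I will use the parametrization of $\Delta^{d-1}$ by its first $d-1$ coordinates. The map $(x_1,\dots,x_{d-1})\mapsto(x_1,\dots,x_{d-1},1-\sum_{i<d}x_i)$ identifies the region
\[
T:=\Bigl\{(x_1,\dots,x_{d-1})\ \Bigm|\ x_i\ge0,\ \sum_{i<d}x_i\le1\Bigr\}
\]
with $\Delta^{d-1}$. This map multiplies $(d-1)$-dimensional Lebesgue measure by a constant factor ($\sqrt d$). Since $\rho$ is normalized, that constant cancels, and we get
\[
\int_{\Delta^{d-1}} f\,d\rho=\frac{1}{\mathrm{vol}(T)}\int_T f\Bigl(x_1,\dots,x_{d-1},1-\textstyle\sum_{i<d}x_i\Bigr)\,dx_1\cdots dx_{d-1}.
\]
We also have $\mathrm{vol}(T)=1/(d-1)!$, which follows from the case $k=0$ of the next step or from a standard induction.

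The key step is the Dirichlet integral
\[
I_d(k):=\int_T x_1^{k_1}\cdots x_{d-1}^{k_{d-1}}\Bigl(1-\textstyle\sum_{i<d}x_i\Bigr)^{k_d}dx
=\frac{\prod_{i=1}^d\Gamma(k_i+1)}{\Gamma\bigl(\sum_i k_i+d\bigr)}=B(k_1+1,\dots,k_d+1).
\]
I will prove it by induction on $d$. For $d=2$ it is exactly the one-variable beta integral $\int_0^1 x^{k_1}(1-x)^{k_2}\,dx=B(k_1+1,k_2+1)$.

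For the inductive step, fix $x_1=s$ and substitute $x_i=(1-s)y_i$ for $2\le i\le d-1$. The inner integral then becomes
\[
(1-s)^{k_2+\cdots+k_d+d-2}\,I_{d-1}(k_2,\dots,k_d).
\]
Integrating over $s$ contributes a factor $B\bigl(k_1+1,\,k_2+\cdots+k_d+d-1\bigr)$. The Gamma factors then telescope to give the claimed product.

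An alternative route avoids the induction. One computes $\int_{\R_{\ge0}^d}\prod_i x_i^{k_i}e^{-\sum_i x_i}\,dx$ in two ways: directly it equals $\prod_i\Gamma(k_i+1)$, and in polar-simplex coordinates $x=r\,y$ with $y\in\Delta^{d-1}$ it factors as $\Gamma(\sum_i k_i+d)$ times the simplex integral. Either argument works.

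Combining the two steps gives $\int x^k\,d\rho=(d-1)!\,B(k_1+1,\dots,k_d+1)$, as claimed. As a sanity check, $k=0$ gives $(d-1)!\,\Gamma(1)^d/\Gamma(d)=1$, consistent with $\rho$ being a probability measure.

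The only delicate point is bookkeeping. One must track the Jacobian $(1-s)^{d-2}$ in the substitution, and confirm that the $\sqrt d$ factor from the embedding is harmless because of the normalization. Everything else is the standard beta–gamma identity $B(a,b)=\Gamma(a)\Gamma(b)/\Gamma(a+b)$.
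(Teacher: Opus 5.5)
Your proposal is correct and follows essentially the same route as the paper. The paper gives no separate proof; it obtains the formula directly from the Dirichlet integral representation of $B(k_1+1,\dots,k_d+1)$ over the simplex together with the normalization $\mathrm{vol}(T)=1/(d-1)!$ (Baladram's Eqs.~(2.2)--(2.3)), whereas you also derive the Dirichlet integral (by induction or via the Gamma-function factorization) instead of quoting it.
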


\begin{corollary}\label{cor:simplex-p2p3-moments}
For $d\in\Z_{>0}$, we have
\begin{align}
\int_{\Delta^{d-1}} p_2(x)\,d\rho(x) &= \frac{2}{d+1}, \label{eq:int-simplex-p2}\\
\int_{\Delta^{d-1}} p_3(x)\,d\rho(x) &= \frac{6}{(d+1)(d+2)}. \label{eq:int-simplex-p3}
\end{align}
\end{corollary}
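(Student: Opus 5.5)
Here $p_k(x):=\sum_{i=1}^d x_i^k$ denotes the $k$-th power sum, which is how the statement uses $p_2$ and $p_3$. The plan is to integrate $p_k$ term by term and apply Proposition~\ref{prop:simplex-moment} to each monomial $x_i^k$. By linearity,
\[
\int_{\Delta^{d-1}} p_k(x)\,d\rho(x)=\sum_{i=1}^d \int_{\Delta^{d-1}} x_i^k\,d\rho(x).
\]
By the symmetry of $\Delta^{d-1}$ and $\rho$ under permuting coordinates, or directly from the symmetric form of the multivariate beta function, each summand is the same. So the whole integral equals $d$ times the integral of $x_1^k$.

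Next I would evaluate that single moment. Taking $k_1=k$ and $k_2=\dots=k_d=0$ in Proposition~\ref{prop:simplex-moment} gives
\[
\int_{\Delta^{d-1}} x_1^k\,d\rho=(d-1)!\,B(k+1,1,\dots,1)=(d-1)!\,\frac{\Gamma(k+1)\,\Gamma(1)^{d-1}}{\Gamma(k+d)}=\frac{(d-1)!\,k!}{(k+d-1)!}.
\]
Multiplying by $d$ yields
\[
\int_{\Delta^{d-1}} p_k\,d\rho=\frac{d!\,k!}{(d+k-1)!}.
\]

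Finally, I would specialize the general formula. For $k=2$ it gives $\dfrac{2\,d!}{(d+1)!}=\dfrac{2}{d+1}$. For $k=3$ it gives $\dfrac{6\,d!}{(d+2)!}=\dfrac{6}{(d+1)(d+2)}$. These are exactly \eqref{eq:int-simplex-p2} and \eqref{eq:int-simplex-p3}.

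There is no real obstacle here. The only points needing care are the factorial bookkeeping $\Gamma(n)=(n-1)!$ and checking that the normalization $(d-1)!$ in Proposition~\ref{prop:simplex-moment} is consistent. The latter is confirmed by the case $k=0$, where the formula gives total mass $(d-1)!\,B(1,\dots,1)=1$.
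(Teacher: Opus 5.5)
Your proposal is correct and follows essentially the same route as the paper: reduce by permutation symmetry to $d\int x_1^k\,d\rho$, then evaluate with Proposition~\ref{prop:simplex-moment} at $(k,0,\dots,0)$. The paper treats $k=2$ and $k=3$ separately, while you first derive the general value $d!\,k!/(d+k-1)!$ and then specialize; the computations agree.
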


\begin{proof}
Since $p_2(x)=\sum_{i=1}^d x_i^2$ and $\rho$ is symmetric under coordinate permutations,
\[
\int_{\Delta^{d-1}} p_2(x)\,d\rho(x)
= d\int_{\Delta^{d-1}} x_1^2\,d\rho(x).
\]
Applying Proposition~\ref{prop:simplex-moment} with $(k_1,k_2,\dots,k_d)=(2,0,\dots,0)$, we get
\[
\int_{\Delta^{d-1}} x_1^2\,d\rho(x)
=(d-1)!\,B(3,1,\dots,1)
=(d-1)!\,\frac{\Gamma(3)\Gamma(1)^{d-1}}{\Gamma(d+2)}
=\frac{2}{d(d+1)}.
\]
Hence \eqref{eq:int-simplex-p2} follows.

Similarly, since $p_3(x)=\sum_{i=1}^d x_i^3$,
\[
\int_{\Delta^{d-1}} p_3(x)\,d\rho(x)
= d\int_{\Delta^{d-1}} x_1^3\,d\rho(x).
\]
Applying Proposition~\ref{prop:simplex-moment} with $(k_1,k_2,\dots,k_d)=(3,0,\dots,0)$, we get
\[
\int_{\Delta^{d-1}} x_1^3\,d\rho(x)
=(d-1)!\,B(4,1,\dots,1)
=(d-1)!\,\frac{\Gamma(4)\Gamma(1)^{d-1}}{\Gamma(d+3)}
=\frac{6}{d(d+1)(d+2)}.
\]
Therefore \eqref{eq:int-simplex-p3} holds.
\end{proof}

\subsection{Projective toric designs}\label{subsec:projective_toric}

In this subsection, following \cite[Sec.~2, Def.~2.4--2.5]{IMAEG2024},
we define designs on the projective torus.
Let $d\in\N$ and consider the $d$-dimensional torus
\[
\T^d := (\R/2\pi\Z)^d.
\]
Let
\[
\T := \{\,(\theta,\dots,\theta)\in\T^d \mid \theta\in\R/2\pi\Z\,\}
\]
be the diagonal subgroup, and call the quotient
\(
P(\T^d) := \T^d/\T
\)
the \emph{projective torus}.
Let $\mu$ be the normalized Haar  measure on $P(\T^d)$.
Next, we define monomials on the projective torus.
Let $s\in\N$ and set $I_d:=\{1,\dots,d\}$.
For $a=(a_1,\dots,a_s),\,b=(b_1,\dots,b_s)\in I_d^s$ and $\varphi=(\varphi_1,\dots,\varphi_d)\in\T^d$, define
\[
m_{a,b}(\varphi)
:= \exp\!\Bigl(i\sum_{k=1}^{s}(\varphi_{a_k}-\varphi_{b_k})\Bigr).
\]
This function is invariant under diagonal translations $\varphi\mapsto\varphi+(\theta,\dots,\theta)$, and hence it is well-defined as a function on $P(\T^d)$ (denoted by the same symbol $m_{a,b}$).

We then define the following function spaces:
\[
\Pol_t\bigl(P(\T^d)\bigr)
:=\operatorname{span}\{\,m_{a,b}\mid 0\le s\le t,\ a,b\in I_d^s\,\}.
\]

\begin{definition}[{\cite[Def.~2.5]{IMAEG2024}}]
\label{def:projective_toric_design}
Let $t\in\N$.
A finite subset $X\subset P(\T^d)$ is called a \emph{projective toric $t$-design}
if for every $f\in\Pol_t\bigl(P(\T^d)\bigr)$,
\[
\int_{P(\T^d)} f(x)\,d\mu(x)
=
\frac{1}{|X|}\sum_{x\in X} f(x)
\]
holds.
\end{definition}

Moreover, Iosue et al.~\cite{IMAEG2024} explicitly constructed projective toric $t$-designs with $\order(d^t)$ points for every $d$ when $t$ is fixed:

\begin{proposition}[{\cite[Sec.~3.1]{IMAEG2024}}]\label{prop:IMAEG-poly-size}
Fix $t\in\N$.
\begin{enumerate}
\item
If $d-1$ is a prime power, then there exists a projective toric $t$-design
$X\subset P(\mathrm{T}^d)$ that is isomorphic to the cyclic group $\Z_{|X|}$, whose order is given by
\[
|X|=\frac{(d-1)^{t+1}-1}{d-2}
=1+(d-1)+\cdots+(d-1)^t.
\]
\item
For a general $d$, let $m\ge d$ be the smallest integer such that $m-1$ is a prime power.
Then there exists a projective toric $t$-design $X\subset P(\mathrm{T}^d)$ satisfying
\[
|X|=\frac{(m-1)^{t+1}-1}{m-2}.
\]
In particular, if $t$ is fixed, then $|X|=\order(d^t)$.
\end{enumerate}
\end{proposition}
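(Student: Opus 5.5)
This is the cited result of Iosue et al., so the plan reconstructs their cyclic construction in the language of Definition~\ref{def:projective_toric_design}. First I reduce the design condition to a statement about exponents. Every monomial $m_{a,b}$ with $a,b\in I_d^s$ equals $\exp\bigl(i\sum_{j=1}^d n_j\varphi_j\bigr)$, where $n_j:=\#\{k:a_k=j\}-\#\{k:b_k=j\}$. Hence $n\in\Z^d$ satisfies $\sum_j n_j=0$ and $\sum_j|n_j|\le 2s\le 2t$. The Haar integral of this character over $P(\T^d)$ is $1$ if $n=0$ and $0$ otherwise. So $X$ is a projective toric $t$-design if and only if $\sum_{x\in X}e^{i\langle n,x\rangle}=0$ for every nonzero $n$ with $\sum n_j=0$ and $\|n\|_1\le 2t$.

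Next I look for $X$ as a cyclic orbit. Put $N:=|X|$ and $x_\ell:=\frac{2\pi\ell}{N}(c_1,\dots,c_d)$ for $\ell\in\Z_N$, with integers $c_j$. The character sum is a geometric sum, so it vanishes exactly when $\sum_j n_jc_j\not\equiv0\pmod N$. Write $n=\sum_k(e_{a_k}-e_{b_k})$ as a difference of two multisets of size $s\le t$; the condition becomes: the multiset sums of $\{c_j\}$ of size $s$ are distinct modulo $N$. Equivalently, all $s$-fold sums $c_{a_1}+\cdots+c_{a_s}$ for $s\le t$ (with padding by a fixed element) are pairwise distinct mod $N$. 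This is exactly a $B_t$-set (Sidon-type set of order $t$) in $\Z_N$. Translating by the diagonal is free, since projective invariance lets me shift all $c_j$ by a common constant.

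For part (1) I use the Bose--Chowla construction. Take $q:=d-1$ a prime power and $N:=\frac{q^{t+1}-1}{q-1}$. Fix a generator $\theta$ of $\mathbb F_{q^{t+1}}^\times$. For each $\alpha\in\mathbb F_q$ let $c_\alpha$ satisfy $\theta^{c_\alpha}\in(\theta+\alpha)\mathbb F_q^\times$, and read $c_\alpha$ in $\Z_N\cong\mathbb F_{q^{t+1}}^\times/\mathbb F_q^\times$. This gives $q$ elements; together with one extra element (the point at infinity, $c=0$, corresponding to the factor $1$) we get $q+1=d$ elements forming a $B_t$-set mod $N$. The proof of the $B_t$ property is the standard one. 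An equality of two size-$s$ multiset sums gives $\prod(\theta+\alpha_k)=\lambda\prod(\theta+\beta_k)$ with $\lambda\in\mathbb F_q^\times$. This is a polynomial identity in $\theta$ of degree $\le t<t+1=[\mathbb F_q(\theta):\mathbb F_q]$, so the two multisets coincide. Padding with the infinity element is what handles sizes below $t$. The resulting orbit is isomorphic to $\Z_N$, and $N$ is as stated.

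For part (2) I pad the dimension. Take $m\ge d$ minimal with $m-1$ a prime power, build the $B_t$-set $\{c_1,\dots,c_m\}$, and keep only $c_1,\dots,c_d$. A subset of a $B_t$-set is still $B_t$, so $X\subset P(\T^d)$ with the same $N$. By Bertrand's postulate there is a prime in $(d-1,2(d-1)]$, so $m\le 2d$ and $N=\order(m^t)=\order(d^t)$.

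The main obstacle is the exact bookkeeping in the reduction step. It must match sizes $s<t$ with multisets of size exactly $t$, and show that the condition ``distinct modulo the diagonal'' is precisely what the $B_t$ property with the extra element supplies. I would settle this first, before invoking Bose--Chowla.
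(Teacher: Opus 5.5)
Your proof is correct. It follows essentially the construction behind the cited result of Iosue et al.; the paper gives no proof of its own, only the citation. Cyclic projective toric $t$-designs of size $N$ in $P(\T^d)$ correspond to $B_t$-sets of size $d$ in $\Z_N$. The Bose--Chowla (Singer-type) $B_t$-sets of $q+1$ elements modulo $(q^{t+1}-1)/(q-1)$ give part (1), and part (2) follows by restriction plus Bertrand's postulate.

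The one point you assert without checking is that the map $\ell\mapsto x_\ell$ from $\Z_N$ to $P(\T^d)$ is injective, which is needed for $|X|=N$ and $X\cong\Z_N$. It holds because $c_0=1$ (since $\theta+0=\theta$) and $c_\infty=0$. So in part (2) you should make sure these two indices are among the $d$ you keep.
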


\subsection{Complex projective $t$-design}
Let $d\in\N$.
Let
\[
\CP^{d-1}:=\{V\le\C^d\mid \dim_{\C} V=1\}
\]
be the complex projective space endowed with the normalized Fubini--Study measure $\mu_{\mathrm{FS}}$.
For each $k\ge0$, set
\[
\Hom_{k,k}(\C^d)
:=
\Bigl\{\,P(z,\overline z)\in
\C[z_1,\dots,z_d,\overline z_1,\dots,\overline z_d]
\ \Bigm|\ 
\deg_z P=\deg_{\overline z}P=k
\Bigr\},
\]
the space of bihomogeneous polynomials of bidegree $(k,k)$.
For $P\in\Hom_{k,k}(\C^d)$ and $[x]\in\CP^{d-1}$, choose a unit representative $x\in\C^d$ and define
\[
P([x]) := P(x,\overline x).
\]
This value is independent of the choice of a unit representative, hence well-defined.
Define
\[
\Pol_t(\CP^{d-1})
:=
\operatorname{span}\Bigl\{\,P|_{\CP^{d-1}}\ \Bigm|\ 
P\in\Hom_{k,k}(\C^d),\ 0\le k\le t
\Bigr\}.
\]

\begin{definition}[{\cite{Hoggar1982}}]
A finite set $X\subset\CP^{d-1}$ is called a \emph{complex projective $t$-design} if for every $f\in\Pol_t(\CP^{d-1})$,
\[
\int_{\CP^{d-1}} f(x)\,d\mu_{\mathrm{FS}}(x)
\;=\;
\frac1{|X|}\sum_{x\in X} f(x)
\]
holds.
\end{definition}

Moreover, Kuperberg~\cite{Kuperberg2006} and Iosue et al.~\cite{IMAEG2024} showed that a $t$-design on $\CP^{d-1}$
can be obtained from a simplex $t$-design and a projective toric $t$-design:

\begin{proposition}[{\cite[Sec.~4.1]{IMAEG2024}}]\label{prop:concatenation}
Let $Y$ be a $t$-design on the simplex $\Delta^{d-1}$, and let $X$ be a $t$-design on the projective torus $P(\mathrm{T}^d)$.
Consider the map
\[
  \pi:\ \Delta^{d-1}\times P(\mathrm{T}^d)\longrightarrow\CP^{d-1}, 
  \qquad
  (p, [\varphi])\longmapsto
  \Bigl[\, \sum_{n=1}^d \sqrt{p_n}\, e^{i\varphi_n}|n\rangle\, \Bigr].
\]
Then
\(
  Z := \pi(Y\times X)
\)
is a $t$-design on $\CP^{d-1}$.
Here $\{|n\rangle\}_{n=1}^d$ denotes the standard orthonormal basis of $\C^{d}$.
\end{proposition}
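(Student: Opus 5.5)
The plan is to reduce everything to monomials and show that both sides factor into a "simplex part" and a "torus part". By linearity, it suffices to check the design identity for $f=P|_{\CP^{d-1}}$ with $P(z,\overline z)=z^{\alpha}\overline z^{\beta}$, where $\alpha,\beta\in\Z_{\ge0}^d$ and $|\alpha|=|\beta|=k\le t$.

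\textbf{Evaluating at lifted points.} For $(p,[\varphi])\in\Delta^{d-1}\times P(\T^d)$, the vector $z=\sum_n\sqrt{p_n}e^{i\varphi_n}|n\rangle$ is a unit representative. Substituting gives
\[
P(\pi(p,[\varphi]))=\prod_{n=1}^d p_n^{(\alpha_n+\beta_n)/2}\,\exp\Bigl(i\sum_{n=1}^d(\alpha_n-\beta_n)\varphi_n\Bigr).
\]
Since $|\alpha|=|\beta|=k$, the exponential factor is exactly a monomial $m_{a,b}$ with $s=k$. Here $a$ lists each index $n$ with multiplicity $\alpha_n$, and $b$ lists each $n$ with multiplicity $\beta_n$. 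Hence this factor lies in $\Pol_t(P(\T^d))$, and the expression is well defined on the projective torus.

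\textbf{Averaging over $X$ and $Y$.} Average the torus factor over $X$ for fixed $p$. By Definition~\ref{def:projective_toric_design}, this average equals the Haar integral $\int m_{a,b}\,d\mu$. That integral is $1$ if $\alpha=\beta$ and $0$ otherwise: a nonzero integer character $\alpha-\beta$ with coordinate sum zero is a nontrivial character of $P(\T^d)$. Therefore the average over $Y\times X$ equals $\delta_{\alpha\beta}\frac1{|Y|}\sum_{p\in Y}p^{\alpha}$. Since $p^\alpha$ is a polynomial of degree $k\le t$, Definition~\ref{def:simplex_t_design} turns this into $\delta_{\alpha\beta}\int_{\Delta^{d-1}}p^\alpha\,d\rho(p)$. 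One subtlety is that $Y\times X$ might map non-injectively, so $Z$ should be understood as a multiset (or the statement read with multiplicities). I would remark on this but not dwell on it.

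\textbf{Computing the Fubini--Study side.} It remains to show $\int_{\CP^{d-1}}z^\alpha\overline z^\beta\,d\mu_{\mathrm{FS}}=\delta_{\alpha\beta}\int_{\Delta^{d-1}}p^\alpha\,d\rho$. Equivalently, $\mu_{\mathrm{FS}}$ is the pushforward of $\rho\times\mu$ under $\pi$; this is the step I expect to need the most care.

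I would argue with Gaussians. Take $g=(g_1,\dots,g_d)$ with i.i.d.\ standard complex Gaussian entries. Then $g/\|g\|$ is uniform on the unit sphere of $\C^d$, and its class is $\mu_{\mathrm{FS}}$-distributed. Writing $g_n=r_ne^{i\varphi_n}$, the phases $\varphi_n$ are i.i.d.\ uniform and independent of the moduli. The $r_n^2$ are i.i.d.\ exponential, so $(r_n^2/\sum_m r_m^2)_n$ is Dirichlet$(1,\dots,1)$, i.e.\ $\rho$-distributed. The class of $\varphi$ in $\T^d/\T$ is then $\mu$-distributed and independent of $p$. This identifies the pushforward, and the monomial integral factors exactly as on the discrete side. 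Combining the two computations proves that $Z$ is a $t$-design.
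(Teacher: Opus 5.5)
Your argument is correct, and it is essentially the standard proof behind this cited result; the paper gives no proof of its own and only refers to Iosue et al. That proof writes $z^\alpha\overline z^\beta$ at $\pi(p,[\varphi])$ as a function of $p$ times the character $m_{a,b}$, lets the toric design kill the terms with $\alpha\neq\beta$ so that the simplex design only ever sees the polynomial $p^\alpha$, and identifies $\mu_{\mathrm{FS}}=\pi_*(\rho\times\mu)$, which your Gaussian/Dirichlet argument does cleanly.

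Your multiset caveat is genuinely needed, not pedantry. When $Y$ has points on the boundary of $\Delta^{d-1}$, as the designs of Theorem~\ref{thm:simplex3design}(ii) do, $\pi$ can fail to be injective on $Y\times X$ with unequal fibre sizes, and then the image set with equal weights need not be a design. For example, take $d=2$, $t=1$, $Y=\{(1,0),(\tfrac14\pm\epsilon,\tfrac34\mp\epsilon)\}$ and $X=\{[0,0],[\pi,0]\}$. The image set has five points, and the average of $|z_1|^2$ over them is $\tfrac25$ rather than $\tfrac12$. So the statement has to be read with multiplicities.
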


\subsection{Tight $t$-fusion frames}

Finally, we define tight $t$-fusion frames, which serve as inputs to the lifting construction, and state the corresponding lifting theorem.
Let $d,k \in \N$ with $1\le k\le d$, and let
\[
G_{k,d}:=\{\,V\le \R^d \mid \dim V = k\,\}
\]
denote the Grassmannian of $k$-dimensional subspaces of $\R^d$.

\begin{definition}\cite[Def.~4.1]{BE2013}\label{def:p-fusion-frame}
Let $\{V_j\}_{j=1}^n\subset G_{k,d}$ be a finite set and let $\{\omega_j\}_{j=1}^n$ be positive weights.
For each $j$, let $P_{V_j}:\mathbb{R}^d\to\mathbb{R}^d$ denote the orthogonal projection onto $V_j$.
If there exist constants $A,B>0$ such that for all $x\in\mathbb{R}^d$,
\[
A\,\|x\|^{2t}\ \le\ \sum_{j=1}^n \omega_j\,\|P_{V_j}x\|^{2t}\ \le\ B\,\|x\|^{2t},
\]
then $\{(V_j,\omega_j)\}_{j=1}^n$ is called a \emph{$t$-fusion frame}.
If, in particular, $A=B$, then it is called a \emph{weighted tight $t$-fusion frame}.
In the equal-weight case, we call $\{(V_j,\omega_j)\}_{j=1}^n$ a \emph{tight $t$-fusion frame} ($\TFF_t$) and often omit the weights, simply writing $\{V_j\}_{j=1}^n$.
\end{definition}

We can obtain a spherical $(2t+1)$-design on $S^{d-1}$ from a tight $t$-fusion frame on $G_{k,d}$ and a spherical $(2t+1)$-design on $S^{k-1}$.
\begin{theorem}[{\cite[Theorem~3.4]{Misawa2026}}]\label{thm:lifting-weighted}
Let $t,s\in\N$. Let $\mathcal{F}\subset G_{k,d}$ be a finite set with positive weights $\{\omega_V\}_{V\in \mathcal{F}}$, and assume that
$(\mathcal{F},\{\omega_V\})$ is a weighted tight $t$-fusion frame.
For each $V\in \mathcal{F}$, let $(Y_V,\{\lambda_{V,z}\}_{z\in Y_V})$ be a weighted spherical $s$-design on $S(V)$, and assume that
\[
\sum_{z\in Y_V}\lambda_{V,z}
\]
is independent of $V$.
Define a weighted finite set $(Z,w)$ on $S^{d-1}$ as follows:
\[
Z:=\bigcup_{V\in \mathcal{F}} Y_V \subset S^{d-1},
\qquad
w(z):=\sum_{\substack{V\in \mathcal{F}\\ z\in Y_V}}\omega_V\,\lambda_{V,z}\quad (z\in Z).
\]
Then $(Z,w)$ is a weighted spherical $r$-design on $S^{d-1}$ with
\(
r=\min\{s,\,2t+1\}.
\)
\end{theorem}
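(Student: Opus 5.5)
We prove Theorem~\ref{thm:lifting-weighted} by testing the defining identity of Definition~\ref{def:weighted-sph-design} on homogeneous polynomials. By linearity it suffices to treat a homogeneous $f$ of degree $m\le r$. We split into the cases $m$ odd and $m$ even.

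\emph{Odd $m$.} The spherical integral of $f$ vanishes, so we must show $\sum_{z\in Z} w(z)f(z)=0$. Expanding $w$ gives
\[
\sum_{z\in Z}w(z)f(z)=\sum_{V\in\mathcal F}\omega_V\sum_{z\in Y_V}\lambda_{V,z}f(z).
\]
Each $Y_V$ is a weighted $s$-design on $S(V)\simeq S^{k-1}$ and $m\le s$. Hence each inner sum equals $\Lambda_V$ times the integral of $f|_V$ over $S(V)$, which is zero because $f|_V$ is odd. No fusion-frame hypothesis is needed here.

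\emph{Even $m=2j\le 2t$.} This is where $r\le 2t+1$ enters. Write $\Lambda:=\sum_{z\in Y_V}\lambda_{V,z}$, which is independent of $V$ by hypothesis. The inner sum over $Y_V$ equals $\Lambda\int_{S(V)}f\,d\sigma_V$. The key step is to express this integral through an averaging operator. Let $\sigma_V$ be the normalized measure on $S(V)$ and define
\[
T_V f(x):=\int_{S(V)} f\,d\sigma_V ,
\]
viewed as a function of $V$. The cleanest route uses the reproducing structure. It suffices to test $f$ of the form $f(y)=\langle x,y\rangle^{2j}$ for $x\in\R^d$, since these span the homogeneous polynomials of degree $2j$. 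For such $f$, by rotation invariance on $S(V)$,
\[
\int_{S(V)}\langle x,y\rangle^{2j}\,d\sigma_V(y)=c_{k,j}\,\|P_Vx\|^{2j},
\]
where $c_{k,j}$ is the $2j$-th moment of a coordinate on $S^{k-1}$. Therefore
\[
\sum_z w(z)\langle x,z\rangle^{2j}=\Lambda\,c_{k,j}\sum_V\omega_V\|P_Vx\|^{2j}.
\]

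The main obstacle is that tightness is assumed only at exponent $t$, while we need $\sum_V\omega_V\|P_Vx\|^{2j}=A_j\|x\|^{2j}$ for every $j\le t$. We will deduce the lower orders from the top one. Apply the Laplacian in $x$ to the identity $\sum_V\omega_V\|P_Vx\|^{2t}=A\|x\|^{2t}$. For a $k$-dimensional projection, a direct computation gives
\[
\Delta\|P_Vx\|^{2t}=2t(2t+k-2)\|P_Vx\|^{2t-2},
\]
while $\Delta\|x\|^{2t}=2t(2t+d-2)\|x\|^{2t-2}$. Dividing, tightness at exponent $t$ implies tightness at $t-1$, and by induction at every $j\le t$.

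With tightness at all orders $j\le t$, the weighted sum becomes $\Lambda c_{k,j}A_j\|x\|^{2j}$. Setting $f\equiv1$ (the case $j=0$) identifies the total weight $\sum_z w(z)$ as $\Lambda A_0$. Evaluating the Laplacian relations at unit $x$, the constant $c_{k,j}A_j/A_0$ should reduce to the $2j$-th moment of a coordinate on $S^{d-1}$. Equivalently, this is $\int_{S^{d-1}}\langle x,y\rangle^{2j}\,d\sigma(y)$ for $\|x\|=1$.

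To confirm this without tracking constants, we use a comparison. Replacing $\mathcal F$ by the full Grassmannian with the invariant measure must give the true integral, and the Laplacian recursion yields the same ratios $A_j/A_0$ in both cases. So the identity holds for all $m\le r$, which proves the theorem.
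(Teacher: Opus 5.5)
Your proof is correct and complete. The paper gives no proof of this statement; it is quoted from \cite{Misawa2026}, so there is no in-paper argument to compare against, and I checked your proof on its own terms.

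Your reduction is the standard one for such lifting results. Odd degrees vanish on each $S(V)$. For even degree $2j\le\min\{s,2t\}$, testing on $y\mapsto\langle x,y\rangle^{2j}$ turns the weighted sum into $\Lambda c_{k,j}\sum_V\omega_V\|P_Vx\|^{2j}$. Your Laplacian descent $\Delta\|P_Vx\|^{2j}=2j(2j+k-2)\|P_Vx\|^{2j-2}$ is the fusion-frame analogue of Venkov's trick for antipodal designs. It gives tightness at every order $j\le t$ by an elementary computation, instead of taking it from the harmonic-analysis description of tight fusion frames on $G_{k,d}$ in \cite{BE2013}.

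Two small tidy-ups. First, the operator $T_V$ you define is never used. Second, the final constant check need not be hedged with ``should reduce'' or routed through the full Grassmannian. That comparison is valid, but it imports the identity $\int_{G_{k,d}}\int_{S(V)}f\,d\sigma_V\,d\nu(V)=\int_{S^{d-1}}f\,d\sigma$ for the invariant probability measure $\nu$. Directly, your recursion gives $A_j/A_0=\prod_{i=1}^{j}\frac{k+2i-2}{d+2i-2}$, and $c_{k,j}=\frac{(2j-1)!!}{k(k+2)\cdots(k+2j-2)}$. Hence
\[
\frac{c_{k,j}A_j}{A_0}=\frac{(2j-1)!!}{d(d+2)\cdots(d+2j-2)}=\int_{S^{d-1}}\langle x,y\rangle^{2j}\,d\sigma(y)\qquad(\|x\|=1).
\]
Together with the total weight $\sum_z w(z)=\Lambda A_0$, this is exactly the required identity.
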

In particular, in the equal-weight case we obtain the following statement.
\begin{corollary}[{\cite[Corollary~3.5 \& Remark~3.6]{Misawa2026}}]\label{cor:lifting}
Let $t,s\in\N$.
Assume that $\mathcal{F}\subset G_{k,d}$ is a tight $t$-fusion frame, and that for each $V\in \mathcal{F}$,
$Y_V\subset S(V)$ is a spherical $s$-design, with $|Y_V|$ independent of $V$.
Then there exist $g_V\in O(V)$ for each $V\in \mathcal{F}$ such that the sets $g_VY_V$ are pairwise disjoint, and
\[
Z=\bigsqcup_{V\in \mathcal{F}} g_VY_V \subset S^{d-1}
\]
is an ordinary spherical $\min\{s,\,2t+1\}$-design.
\end{corollary}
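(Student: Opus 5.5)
The plan is to derive Corollary~\ref{cor:lifting} from Theorem~\ref{thm:lifting-weighted} by specializing to equal weights, and then to handle separately the extra claim that the union can be made disjoint by rotating inside each $V$.

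\emph{Step 1: equal weights.} Take $\omega_V=1$ for all $V\in\mathcal{F}$, and $\lambda_{V,z}=1$ for all $z\in Y_V$. Then $\sum_{z\in Y_V}\lambda_{V,z}=|Y_V|$, which is independent of $V$ by hypothesis, so Theorem~\ref{thm:lifting-weighted} applies. For every $g_V\in O(V)$, the set $g_VY_V$ is again a spherical $s$-design on $S(V)$, because $\sigma_V$ is $O(V)$-invariant and the space of polynomials of degree at most $s$ on $V$ is $O(V)$-stable. Applying the theorem to the family $\{g_VY_V\}$ therefore shows that the weighted set with
\[
w(z)=\#\{V\in\mathcal{F} \mid z\in g_VY_V\}
\]
is a weighted spherical $r$-design with $r=\min\{s,2t+1\}$.

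\emph{Step 2: choose the $g_V$ to make the sets disjoint.} If the $g_VY_V$ are pairwise disjoint, then $w\equiv 1$ on $Z$, and $(Z,w)$ is an ordinary spherical $r$-design, which proves the corollary. So it remains to find such $g_V$.

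I would choose them one at a time, ordering $\mathcal{F}=\{V_1,\dots,V_n\}$. Suppose $g_{V_1},\dots,g_{V_{j-1}}$ are fixed. We need $g\in O(V_j)$ with $gY_{V_j}$ disjoint from the finite set
\[
F=\bigcup_{i<j} g_{V_i}Y_{V_i}.
\]
For $y\in Y_{V_j}$ and $p\in F\cap S(V_j)$, the bad set $\{g\in O(V_j)\mid gy=p\}$ is a coset of the stabilizer of $y$ in $O(V_j)$. Points $p\notin V_j$ impose no condition, since $gy\in V_j$ always.

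\emph{Step 3: the bad set is small.} When $\dim V_j=k\ge 2$, each such coset is a closed submanifold of dimension $\dim O(k-1)<\dim O(k)$, hence has Haar measure zero. A finite union of these therefore cannot cover $O(V_j)$, and a good $g$ exists.

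When $k=1$, we have $O(V)=\{\pm1\}$, and this argument fails. It also fails when two subspaces coincide and $Y_V$ is antipodal and unavoidable. For example, with $\dim V=1$ every $Y_V$ must contain $\pm$ the unit vector to be a $1$-design, so two equal lines can never be separated. I expect this degenerate case to be the main obstacle. I would handle it by assuming $k\ge 2$, or that the subspaces in $\mathcal{F}$ are distinct, which is implicit in the intended applications with $k=2$. Alternatively, I would note that when the subspaces are pairwise distinct lines, the sets are already disjoint apart from intersections $V_i\cap V_j=\{0\}$, which contain no unit vectors.
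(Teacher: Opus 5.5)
Your proof is correct and follows the same route as the paper, which obtains this corollary by specializing Theorem~\ref{thm:lifting-weighted} to equal weights and making the pieces disjoint by rotating each $Y_V$ inside $O(V)$ (the cited Remark~3.6). The case $k=1$ that you flag as an obstacle needs no extra hypothesis: since $\mathcal{F}\subset G_{k,d}$ is a set, its lines are pairwise distinct, so the spheres $S(V)$ are already pairwise disjoint and $g_V=\mathrm{id}$ works, exactly as in your closing remark.
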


Moreover, Bachoc--Ehler~\cite{BE2013} showed that a tight $t$-fusion frame on $G_{2,2d}$ can be obtained from a $t$-design on $\CP^{d-1}$.
\begin{proposition}[{\cite[Thm.~6.4]{BE2013}}]\label{prop:proj-to-ff}
Let $Y\subset \CP^{d-1}$ be a $t$-design.
Identify $\C^{d}$ with $\R^{2d}$ via
\[
\C^{d}\longrightarrow \R^{2d},\qquad
z=x+iy\longmapsto (x,y).
\]
Then
\[
\mathcal{F}:=\{\,\mathrm{span}_{\R}\{\,v,\,iv\,\}\mid [v]\in Y\,\}\ \subset\ G_{2,2d}
\]
is a tight $t$-fusion frame in $G_{2,2d}$.
\end{proposition}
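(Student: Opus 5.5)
The plan is to realize the complex structure concretely. Write each $[v]\in Y$ with a unit representative $v\in\C^d$, and let $W_v:=\mathrm{span}_{\R}\{v,iv\}\subset\R^{2d}$. Under the identification $\C^d\cong\R^{2d}$, the real inner product is $\langle x,y\rangle_{\R}=\mathrm{Re}\,\langle x,y\rangle_{\C}$. Since $v$ and $iv$ are real-orthonormal, we have $\langle v,iv\rangle_{\R}=\mathrm{Re}(i\|v\|^2)\cdot(\pm1)=0$. First I verify that $W_v$ is a genuine $2$-plane, independent of the representative, because $W_{e^{i\theta}v}=W_v$. Next I compute the projection norm:
\[
\|P_{W_v}x\|^2=\langle x,v\rangle_{\R}^2+\langle x,iv\rangle_{\R}^2=\bigl(\mathrm{Re}\,\langle x,v\rangle_{\C}\bigr)^2+\bigl(\mathrm{Im}\,\langle x,v\rangle_{\C}\bigr)^2=|\langle x,v\rangle_{\C}|^2 .
\]
Hence $\|P_{W_v}x\|^{2t}=|\langle x,v\rangle|^{2t}=\bigl(\langle x,v\rangle\overline{\langle x,v\rangle}\bigr)^t$.

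For fixed $x\in\C^d$, the function $[v]\mapsto |\langle x,v\rangle|^{2t}$ is the restriction to $\CP^{d-1}$ of a bihomogeneous polynomial of bidegree $(t,t)$ in $(v,\overline v)$. It therefore lies in $\Pol_t(\CP^{d-1})$. Applying the design property of $Y$ gives
\[
\frac1{|Y|}\sum_{[v]\in Y}\|P_{W_v}x\|^{2t}=\int_{\CP^{d-1}}|\langle x,v\rangle|^{2t}\,d\mu_{\mathrm{FS}}([v]).
\]
The remaining step, and the only nontrivial one, is to show that the right-hand side equals $c_{t,d}\|x\|^{2t}$ for a constant $c_{t,d}>0$. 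For this I use the unitary invariance of $\mu_{\mathrm{FS}}$. For $x\neq0$, choose $U\in U(d)$ with $Ux=\|x\|e_1$. Then the integral equals $\|x\|^{2t}\int|v_1|^{2t}\,d\mu_{\mathrm{FS}}$, and this last integral is a positive constant; explicitly it is $\binom{d+t-1}{t}^{-1}$, though only positivity is needed. Consequently $\sum_{[v]\in Y}\|P_{W_v}x\|^{2t}=|Y|c_{t,d}\|x\|^{2t}$, so $A=B$ in Definition~\ref{def:p-fusion-frame}.

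Two technical points remain. First, distinct $[v]\in Y$ give distinct planes $W_v$, since $W_v$ determines the complex line $\C v=W_v$; so the weights are indeed equal, with no collapsing into multiplicities. Second, if one prefers a multiset, equal weights still hold. I expect no real obstacle; the key identity is $\|P_{W_v}x\|^2=|\langle x,v\rangle_{\C}|^2$, together with unitary invariance.
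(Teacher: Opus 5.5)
Your proof is correct and complete. The identity $\|P_{W_v}x\|^2=|\langle x,v\rangle_{\C}|^2$ holds (the sign of $\mathrm{Im}$ depends on the convention but disappears after squaring). Applying the design property to the single bidegree-$(t,t)$ function $[v]\mapsto|\langle x,v\rangle|^{2t}$ is legitimate, and $U(d)$-invariance of $\mu_{\mathrm{FS}}$ gives the constant $c_{t,d}=\binom{d+t-1}{t}^{-1}>0$.

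There is no competing argument in the paper to compare with. The paper states this proposition without proof and simply imports it from \cite{BE2013}. Your argument therefore makes the pipeline self-contained at this step. It also records two facts the paper leaves implicit.
\begin{itemize}
\item The explicit frame bound is $A=B=|Y|\binom{d+t-1}{t}^{-1}$.
\item The map $[v]\mapsto W_v$ is injective, because $W_v=\C v$. So passing from $Y$ to the set $\mathcal F$ loses no multiplicities. This matters because the paper's definition uses equal weights over a set. It also shows that the inequality $|\mathcal F|\le|Z|$ used later in the paper is actually an equality.
\end{itemize}

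One cosmetic fix: the sentence ``since $v$ and $iv$ are real-orthonormal, we have $\langle v,iv\rangle_{\R}=\dots=0$'' presupposes what it is meant to establish. Replace it with the direct computation $\langle v,iv\rangle_{\R}=\mathrm{Re}\,\langle v,iv\rangle_{\C}=\mathrm{Re}(\mp i)\,\|v\|^2=0$, together with $\|iv\|=\|v\|=1$.
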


\section{Construction of spherical $5$-designs on $S^{d-1}$}
In this section, we explain the construction method used in this paper and explicitly construct spherical $5$-designs on $S^{d-1}$ with
\(
|X|=\order(d^3).
\)

\subsection{On the construction method}

First, by Corollary~\ref{cor:lifting}, in order to obtain a $(2t+1)$-design on $S^{d-1}$, it suffices to construct:
\begin{enumerate}
  \item\label{c1} a $(2t+1)$-design on $S^{k-1}$, and
  \item\label{c2} a tight $t$-fusion frame on $G_{k,d}$.
\end{enumerate}

In this subsection we take $(d,k)=(2d',2)$ for $d'\in\N$. In this case, \eqref{c1} is provided by the vertices of a regular $(2t+2)$-gon on $S^1$.
Hence, if we can construct \eqref{c2}, namely a tight $t$-fusion frame on $G_{2,2d'}$ for arbitrary $d'$,
then we obtain a $(2t+1)$-design on $S^{2d'-1}$. In the case $(d,k)=(2d',2)$, the construction considered in this paper can be organized schematically as in Figure~\ref{fig:pipeline},
and is essentially reduced to constructing simplex $t$-designs. Here we write $(M,t)$ for a space $M$ together with the parameter $t$ specifying the relevant notion of design (or tight $t$-fusion frame) on $M$.

\begin{figure}[htbp]
  \centering
  \begin{tikzpicture}
    \draw[->] (0,0)--(2,0);
    \draw(-0.25,1/2)--(0,0);
    \draw(-0.25,-1/2)--(0,0);
    \draw[->] (4,0)--(6,1/2);
    \draw(7.76,1/2)--(8,0);
    \draw(7.76,-1/2)--(8,0);
    \draw[->] (8,0)--(10,0);

    \draw (-1.05,1/2)node{$(\Delta^{d'-1},t)$};
    \draw (-1.05,-1/2)node{$(P(\T^{d'}),t)$};
    \draw (-1.05,-4/5)node[below]{Prop.~\ref{prop:IMAEG-poly-size}};
    \draw (3,0)node{$(\CP^{d'-1},t)$};
    \draw (6.9,1/2)node{$(G_{2,2d'},t)$};
    \draw (6.9,-1/2)node{$(S^1,2t+1)$};
    \draw (6.9,-4/5)node[below]{By~\cite{DGS1977}};
    \draw (11.3,0)node{$(S^{2d'-1},2t+1)$};

    \draw (1,-1/4)node{Prop.~\ref{prop:concatenation}};
    \draw (5,-1/4)node{Prop.~\ref{prop:proj-to-ff}};
    \draw (9,-1/4)node{Cor.~\ref{cor:lifting}};
  \end{tikzpicture}
  \caption{Pipeline of the construction in the case $(d,k)=(2d',2)$.}
  \label{fig:pipeline}
\end{figure}
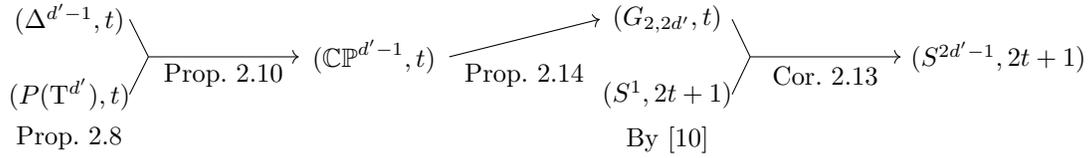

Furthermore, Rabau--Bajnok~\cite{RB1991} gave a method to construct a spherical $t$-design in one higher dimension from an interval $t$-design and a spherical $t$-design:

\begin{theorem}[{\cite[Theorem~4.1]{RB1991}}]\label{thm:rabaubajnok}
Let $t\in\N$ and $d\ge 2$.
Let $Y$ be a spherical $t$-design on $S^{d-1}\subset\R^{d}$, and let $U\subset[-1,1]$ be an interval $t$-design with respect to the weights $w_d$.
Then
\[
  X
  :=\Bigl\{
    (u, \sqrt{1-u^2}\, y)\in S^{d}
    \ \Bigm|\ 
    u\in U,\ y\in Y
  \Bigr\}
\]
is a spherical $t$-design on $S^{d}\subset\R^{d+1}$.
\end{theorem}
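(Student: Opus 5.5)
We will verify the design condition monomial by monomial. The identity is linear in $f$, so it suffices to check every polynomial $f$ on $\R^{d+1}$ of degree at most $t$. Write a point of $S^{d}$ as $(u,\sqrt{1-u^2}\,y)$ with $u\in[-1,1]$ and $y\in S^{d-1}$. The key analytic input is the slice decomposition of the normalized surface measure on $S^{d}$:
\[
\int_{S^{d}} f\,d\sigma_{d}
=\frac{1}{\alpha_d}\int_{-1}^{1}\Bigl(\int_{S^{d-1}} f\bigl(u,\sqrt{1-u^2}\,y\bigr)\,d\sigma_{d-1}(y)\Bigr)(1-u^2)^{\frac{d-2}{2}}\,du .
\]
This is the standard formula from polar coordinates around the first axis. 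The weight $w_d(u)=(1-u^2)^{(d-2)/2}$ arises as the $(d-1)$-dimensional area factor $(1-u^2)^{(d-1)/2}$ times the Jacobian $(1-u^2)^{-1/2}$ of the arc length.

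By linearity it is enough to take $f(u,z)=u^{a}g(z)$, where $g$ is a homogeneous polynomial of degree $b$ on $\R^{d}$ and $a+b\le t$. Then $f(u,\sqrt{1-u^2}\,y)=u^{a}(1-u^2)^{b/2}g(y)$, and the computation splits into two factors.

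First consider the inner average over $Y$. Since $Y$ is a spherical $t$-design and $\deg g=b\le t$, we have
\[
\frac1{|Y|}\sum_{y\in Y}g(y)=\int_{S^{d-1}}g\,d\sigma_{d-1}=:c_g .
\]
Consequently
\[
\frac1{|X|}\sum_{x\in X}f(x)=c_g\cdot\frac1{|U|}\sum_{u\in U}u^{a}(1-u^2)^{b/2},
\]
and by the slice formula
\[
\int_{S^{d}} f\,d\sigma_d = c_g\cdot\frac1{\alpha_d}\int_{-1}^{1}u^a(1-u^2)^{b/2}w_d(u)\,du .
\]
The product structure also requires $|X|=|U||Y|$, i.e.\ the map $(u,y)\mapsto(u,\sqrt{1-u^2}\,y)$ must be injective. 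This holds unless $\pm1\in U$. If $\pm1\in U$, we treat $X$ as a multiset, or we note that the construction is intended with multiplicity, as in \cite{RB1991}.

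The main obstacle is the factor $(1-u^2)^{b/2}$, which is not a polynomial when $b$ is odd. The way around it is that $b$ odd forces $c_g=0$, so both sides vanish. To see this, use the antipodal invariance of $\sigma_{d-1}$: $g(-y)=-g(y)$ gives $c_g=0$. Note that we only need the value of $c_g$ computed by the integral; the $Y$-average equals it by the design property. When $b$ is even, $u^a(1-u^2)^{b/2}$ is a polynomial of degree $a+b\le t$. The interval $t$-design property of $U$ with respect to $w_d$ then equates the two $u$-factors. In both cases the two sides agree, which proves that $X$ is a spherical $t$-design on $S^{d}$.
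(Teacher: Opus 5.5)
The paper gives no proof of this theorem; it is quoted from Rabau--Bajnok \cite{RB1991}. Your argument is the standard one and it is correct. It rests on four steps: the slice formula for $\sigma_d$ with density $(1-u^2)^{(d-2)/2}$; the reduction to $f=u^a g(z)$ with $g$ homogeneous of degree $b$ and $a+b\le t$; the observation that $c_g=0$ for odd $b$, so the non-polynomial factor $(1-u^2)^{b/2}$ never has to be tested against $U$; and the interval design property for even $b$.

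The one thing to sharpen is the degenerate case $\pm1\in U$. There, reading $X$ as a multiset is not an optional convention. If $|Y|\ge 2$, the set $X$ is in general \emph{not} a design, because each collapsed pole gets mass $1/|X|$ instead of $1/|U|$. Take $d=2$, $t=2$, $Y$ an equilateral triangle, and $U=\{0,\pm1,\pm\frac13,\pm\frac{1}{3\sqrt2}\}$; this $U$ is an interval $2$-design for $w_2\equiv1$, since its mean square is $\frac13$. The set $X$ then has $17$ points and the average of $x_1^2$ over it is $3/17\neq\frac13$. So the statement should either assume $U\subset(-1,1)$, in which case your injectivity remark gives $|X|=|U||Y|$, or regard $X$ as a multiset. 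The paper's only use of the theorem, Theorem~\ref{thm:odd_construction}, is unaffected, because Proposition~\ref{prop:Vd_interval5-design} explicitly verifies $V_d\subset(-1,1)$.
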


\begin{remark}
The Rabau--Bajnok method is usually used to construct spherical $t$-designs on $S^{d-1}$ from a spherical $t$-design on $S^{1}$ and an interval $t$-design.
However, for the following reasons, it is difficult to use this method alone to construct spherical $t$-designs with small cardinality for all $d$ with $t$ fixed:
\begin{enumerate}
  \item At each step one needs an explicit interval $t$-design with respect to the Gegenbauer measure,
  but such general explicit constructions are known only in very limited cases.
  \item Even if one can obtain an interval $t$-design at each step, the number of points increases by
  ``the number of points in the spherical $t$-design from the previous step times the number of points in the interval $t$-design''
  each time the dimension is increased by $1$.
  Hence the final number of points grows exponentially in $d$   (See \cite{Kuperberg2006,Baladram-phd}).
\end{enumerate}
\end{remark}

Applying Theorem~\ref{thm:rabaubajnok} to our construction, as long as one can construct an interval $t$-design with respect to the weights $w_d$,
one obtains a $(2t+1)$-design on $S^{2d'}$ from a $(2t+1)$-design on $S^{2d'-1}$ obtained via Figure~\ref{fig:pipeline}.

\subsection{Construction of spherical $5$-designs on $S^{d-1}$}
As shown in Figure~1 (Subsection~3.1), to construct a spherical $5$-design on $S^{2d-1}$ it suffices to construct a simplex $2$-design. Baladram~\cite{Baladram2018} has already constructed a $d$-point simplex $2$-design on $\D$.
\begin{proposition}[{\cite[Cor.~4.1]{Baladram2018}}]\label{cor:baladram-4.1}
For $d\ge 2$, let $C_d$ be the cyclic group of order $d$, acting on $\D$ by coordinate permutations.
Let
\(
x=(a,\ldots,a,\ 1-(d-1)a) \in \D
\)
with
\[
a\in\left\{\frac1d\pm \frac{1}{d\sqrt{d+1}}\right\}.
\]
Then $C_d\cdot x$ is a $d$-point simplex $2$-design on $\D$.
\end{proposition}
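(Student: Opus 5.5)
Since every monomial of degree at most $2$ restricted to $\D$ can be written, via the relation $\sum_i x_i=1$, in terms of the constant $1$, the linear forms $x_i$, and the quadratic monomials $x_i^2$, $x_ix_j$, it suffices to check the averages of these functions over the orbit $C_d\cdot x$.

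The orbit $C_d\cdot x$ consists of the $d$ points whose coordinates are all $a$ except one coordinate equal to $b:=1-(d-1)a$. Cyclically permuting the coordinates permutes these points, so the orbit is exactly $\{a\mathbf{1}+(b-a)e_k\}_{k=1}^d$. When $a\neq b$ these points are distinct; this holds because $a\neq 1/d$.

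The orbit average of any polynomial is $S_d$-invariant, and so is the integral against $\rho$. It therefore suffices to compare the invariant quantities.
\begin{enumerate}
\item[(1)] For degree $1$: the average of $x_1$ is $\frac{(d-1)a+b}{d}=\frac1d$, which matches $\int x_1\,d\rho=(d-1)!B(2,1,\dots,1)=\frac1d$.
\item[(2)] For degree $2$: since $(\sum_i x_i)^2=1$ on $\D$, we have $\sum_{i\ne j}x_ix_j=1-p_2$. By symmetry all $x_ix_j$ with $i\ne j$ have equal average and equal integral. Hence, once $p_2$ matches, both $x_1^2=p_2/d$ and $x_1x_2=(1-p_2)/(d(d-1))$ match.
\end{enumerate}

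The key condition is therefore
\[
\frac1d\sum_{y\in C_d x}p_2(y)=p_2(x)=(d-1)a^2+\bigl(1-(d-1)a\bigr)^2=\frac{2}{d+1},
\]
where the right-hand side is Corollary~\ref{cor:simplex-p2p3-moments}. Write $a=\frac1d+\varepsilon$. Then $b=\frac1d-(d-1)\varepsilon$, and $p_2(x)=\frac1d+(d-1)\varepsilon^2+(d-1)^2\varepsilon^2=\frac1d+d(d-1)\varepsilon^2$; the cross terms cancel. Setting this equal to $\frac{2}{d+1}$ gives $d(d-1)\varepsilon^2=\frac{d-1}{d(d+1)}$, that is, $\varepsilon=\pm\frac{1}{d\sqrt{d+1}}$. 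This is exactly the stated value of $a$.

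Finally, one must check that $x\in\D$, i.e. $b\ge0$. For the plus sign, $b=\frac1d-\frac{d-1}{d\sqrt{d+1}}$ is nonnegative iff $\sqrt{d+1}\ge d-1$, which holds only for $d\le 3$. This positivity check is the main obstacle: for $a=\frac1d+\frac1{d\sqrt{d+1}}$ and $d\ge4$ the point lies outside the simplex. I would therefore verify the sign condition carefully and, if needed, restrict the plus sign to $d\le 3$, or note that Baladram's statement intends whichever choice lies in $\D$. The minus sign always works, since then $a>0$ and $b>0$.
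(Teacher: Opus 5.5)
Your argument is correct and complete. The paper gives no proof of this statement; it is quoted from Baladram. Your verification is the degree-$2$ analogue of the paper's own reduction in Lemma~\ref{lem:sd-inv-simplex3}. Since $C_d\cdot x=\{a\mathbf 1+(b-a)e_k\}_{k=1}^d$ is the full $S_d$-orbit, the design property reduces to $p_2(x)=\frac{2}{d+1}$, and your computation $p_2(x)=\frac1d+d(d-1)\varepsilon^2$ settles this.

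Your positivity caveat is also right. It points to a genuine inaccuracy in the statement as printed, not a gap in your proof. For the plus sign,
\[
1-(d-1)a=\frac1d\Bigl(1-\frac{d-1}{\sqrt{d+1}}\Bigr)<0 \qquad\text{for all } d\ge4,
\]
so only the minus sign yields a point of $\D$ in general. For example, $d=4$ gives $1-3a\approx-0.085$. The plus sign is admissible only for $d\le3$; for $d=3$ it gives the boundary point $(\frac12,\frac12,0)$.

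This does not affect the paper's later results. Proposition~\ref{prop:cp-design-from-simplex-toric} needs only one valid simplex $2$-design. The map $\pi$ in Proposition~\ref{prop:concatenation} requires nonnegative coordinates anyway, so the minus-sign choice should be used.
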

Therefore we obtain the following.
\begin{proposition}
\label{prop:cp-design-from-simplex-toric}
For every $d\ge 2$, a $\TFF_2$ on $G_{2,2d}$ can be constructed with $\mathcal{O}(d^{3})$ points.
\end{proposition}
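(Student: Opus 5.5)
Following the pipeline of Figure~\ref{fig:pipeline} with $t=2$, the plan is to build the fusion frame in three steps. First take the simplex $2$-design $Y:=C_d\cdot x\subset\D$ of Proposition~\ref{cor:baladram-4.1}. It has $|Y|=d$ points, where $a=\frac1d\pm\frac{1}{d\sqrt{d+1}}$. We must check $x\in\D$, i.e.\ $1-(d-1)a\ge 0$. For the $+$ sign this amounts to $(d-1)(\sqrt{d+1}+1)\le d\sqrt{d+1}$, which is equivalent to $d-1\le\sqrt{d+1}$; that fails for large $d$, so we take the $-$ sign. Then $a>0$ and $1-(d-1)a=\frac1d+\frac{d-1}{d\sqrt{d+1}}>0$.

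Second, take a projective toric $2$-design $X\subset P(\T^d)$ from Proposition~\ref{prop:IMAEG-poly-size}. If $d-1$ is a prime power, then $|X|=1+(d-1)+(d-1)^2$. Otherwise take the least $m\ge d$ with $m-1$ a prime power; then $|X|=1+(m-1)+(m-1)^2$. By Bertrand's postulate there is a prime, hence a prime power, in $[d-1,2(d-1)]$, so $m-1\le 2(d-1)$. Hence $|X|=\order(d^2)$ in all cases.

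Third, Proposition~\ref{prop:concatenation} gives that $Z:=\pi(Y\times X)$ is a $2$-design on $\CP^{d-1}$, with $|Z|\le|Y|\,|X|=\order(d^3)$. Proposition~\ref{prop:proj-to-ff} then converts $Z$ into the tight $2$-fusion frame $\mathcal F=\{\mathrm{span}_\R\{v,iv\}\mid [v]\in Z\}\subset G_{2,2d}$, with $|\mathcal F|\le |Z|=\order(d^3)$.

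The one subtle point is multiplicity. If $\pi$ is not injective on $Y\times X$, or two lines give the same plane, then $Z$ and $\mathcal F$ should be read as multisets, i.e.\ equal-weight frames with repetition. The design identities are linear in the counting measure, so they remain valid, and the cardinality bound is unaffected. I would remark on this rather than try to prove injectivity. If a genuine set is required, one can merge repeated subspaces into integer weights; Theorem~\ref{thm:lifting-weighted} still applies downstream. Alternatively one could argue generic injectivity, since all coordinates of $x$ are positive and $X$ consists of distinct classes. The main obstacle is therefore only this bookkeeping together with the explicit prime-power bound $m-1\le 2(d-1)$; everything else is a direct composition of the cited results.
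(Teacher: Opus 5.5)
Your proposal is correct and follows the same route as the paper: Baladram's $d$-point simplex $2$-design, a projective toric $2$-design of size $\order(d^2)$ from Proposition~\ref{prop:IMAEG-poly-size}, the concatenation map $\pi$, and then the Bachoc--Ehler passage to $G_{2,2d}$, which gives $|\mathcal F|\le |Y||X|=\order(d^3)$. Your extra checks go slightly beyond the paper and are sound. The $+$ sign in Proposition~\ref{cor:baladram-4.1} does make the last coordinate $\frac1d-\frac{d-1}{d\sqrt{d+1}}$ negative once $d\ge 4$, so choosing the $-$ sign is a genuine fix. With the $-$ sign all coordinates of $x$ are positive, so $\pi$ is actually injective on $Y\times X$ whenever $X$ has no repeated classes, and distinct complex lines always give distinct real planes; hence your multiset caveat is not even needed.
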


\begin{proof}
Let $Y$ be a $d$-point simplex $2$-design on $\D$ by Proposition~\ref{cor:baladram-4.1}, and let $X$ be a $2$-design on $P(\mathrm{T}^d)$ by Proposition~\ref{prop:IMAEG-poly-size} with $|X|=\mathcal{O}(d^{2})$ points.
Applying Proposition~\ref{prop:concatenation} to $Y$ and $X$, we obtain a $2$-design
$Z:=\pi(Y\times X)\subset\CP^{d-1}$ with
\[
|Z|\le |Y||X|=\mathcal{O}(d^3).
\]
Finally, by Proposition~\ref{prop:proj-to-ff}, we obtain a $\TFF_2$ on $G_{2,2d}$ with at most $\mathcal{O}(d^3)$ points.
\end{proof}
Finally, applying Corollary~\ref{cor:lifting}, we obtain the following.

\begin{theorem}
    \label{thm:even_construction}
There exists a spherical $5$-design $X_d\subset S^{2d-1}$ such that
\(
|X_d|=\mathcal{O}\!\left(d^{3}\right).
\)
\end{theorem}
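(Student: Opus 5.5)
The plan is to feed the tight fusion frame of Proposition~\ref{prop:cp-design-from-simplex-toric} into the lifting Corollary~\ref{cor:lifting} with $t=2$, $k=2$, and ambient dimension $2d$.

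\emph{Step 1 (the frame).} By Proposition~\ref{prop:cp-design-from-simplex-toric} we have a tight $2$-fusion frame $\mathcal{F}\subset G_{2,2d}$ with $|\mathcal{F}|=\order(d^3)$. Concretely, we take Baladram's $d$-point simplex $2$-design $Y$ (Proposition~\ref{cor:baladram-4.1}) and a projective toric $2$-design $X$ with $\order(d^2)$ points (Proposition~\ref{prop:IMAEG-poly-size}). Proposition~\ref{prop:concatenation} turns these into the $2$-design $Z=\pi(Y\times X)\subset\CP^{d-1}$. Proposition~\ref{prop:proj-to-ff} then gives $\mathcal{F}=\{\mathrm{span}_\R\{v,iv\}\mid [v]\in Z\}$.

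One point needs care. The Bachoc--Ehler statement treats the frame as a set with equal weights. If $\pi$ identifies points, or two distinct points of $Z$ give the same plane, the design should be read as a multiset. Distinct lines in $\C^d$ give distinct real planes $\mathrm{span}_\R\{v,iv\}$, so I would simply keep the multiplicities. Equivalently, I would view the frame as weighted with integer weights and use Theorem~\ref{thm:lifting-weighted}, or note that Corollary~\ref{cor:lifting} (through its rotations $g_V$) works for multisets of subspaces. In either form the count $|\mathcal{F}|\le |Y||X|=\order(d^3)$ is unaffected.

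\emph{Step 2 (the circle designs).} For each plane $V\in\mathcal{F}$, I would take $Y_V$ to be the vertices of a regular hexagon in $S(V)\simeq S^1$. This is a spherical $5$-design on $S^1$ by \cite{DGS1977}, since a regular $(s+1)$-gon is an $s$-design. All $|Y_V|=6$ are equal, as the corollary requires.

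\emph{Step 3 (lifting).} Corollary~\ref{cor:lifting} with $s=5$ and $t=2$ gives rotations $g_V\in O(V)$ such that $X_d:=\bigsqcup_{V}g_VY_V$ is a disjoint union and an unweighted spherical $\min\{5,2\cdot2+1\}=5$-design on $S^{2d-1}$. Its cardinality is $6|\mathcal{F}|=\order(d^3)$.

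The only step that is not a direct citation is the multiplicity issue in Step 1. This is the one place where equal weights must be checked. I expect it to be harmless, since repeated subspaces just become equal integer weights, and disjointness is recovered by the rotations $g_V$ in the corollary.
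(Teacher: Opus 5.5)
Your proposal is correct and is essentially the paper's proof: take the $\TFF_2$ $\mathcal F\subset G_{2,2d}$ with $|\mathcal F|=\order(d^3)$ from Proposition~\ref{prop:cp-design-from-simplex-toric}, place a regular hexagon (a $5$-design on $S^1$) on each plane, and apply Corollary~\ref{cor:lifting} with $t=2$, $s=5$ to get an unweighted $5$-design with $6|\mathcal F|=\order(d^3)$ points. Your extra remark on possible repetitions in $Z=\pi(Y\times X)$ is sound additional care that the paper skips by simply writing $|Z|\le |Y||X|$; your fix is to keep multiplicities and use distinct rotations of the hexagon on a repeated plane.
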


\begin{proof}
By Proposition~\ref{prop:cp-design-from-simplex-toric}, there exists a $\TFF_2$
\(
\mathcal F\subset G_{2,2d}
\)
with $|\mathcal F|=\mathcal O(d^3)$.
For each $V\in\mathcal F$, let $Y_V\subset S(V)\simeq S^1$ be the vertex set of a regular hexagon.
Then $Y_V$ is a spherical $5$-design on $S(V)$, and hence Corollary~\ref{cor:lifting} yields a spherical $5$-design
\[
X_d:=\bigsqcup_{V\in\mathcal F} g_V Y_V \ \subset\ S^{2d-1}
\]
for suitable $g_V\in O(V)$.
In particular, $|X_d|=6|\mathcal F|=\mathcal O(d^3)$.
\end{proof}

According to Subsection~3.1, in order to construct a spherical $5$-design in odd dimensions, it suffices to construct, for each $d\in\N$,
an interval $5$-design with respect to the weight
\(
  w_{2d}(u) \;=\; (1-u^2)^{d-1}.
\)
Equivalently, it is not necessary to construct interval $5$-designs with respect to the weight $(1-u^2)^{\frac{d-1}{2}}$;
it suffices to do so only for this specific family.

\begin{proposition}
\label{prop:Vd_interval5-design}
Let $d\in\N$ and set
\[
A_d= \frac{1}{4(2d+1)}, \quad
B_d=\frac{13\sqrt{2d+3}+\sqrt{330d-177}}{8(2d+1)\sqrt{2d+3}},\quad
C_d=\frac{13\sqrt{2d+3}-\sqrt{330d-177}}{8(2d+1)\sqrt{2d+3}}.
\]
Then
\[
  V_d
  \;:=\;
  \bigl\{\, 0,\ \pm\sqrt{A_d},\ \pm\sqrt{B_d},\ \pm\sqrt{C_d}\, \bigr\}
  \;\subset (-1, 1)
\]
is a $7$-point interval $5$-design with respect to the weight $w_{2d}$.
\end{proposition}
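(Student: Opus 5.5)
The plan is to check Definition~\ref{def:interval_t_design} directly on monomials $u^m$ with $0\le m\le 5$, since these span the polynomials of degree at most $5$. Before that, I would confirm that $V_d$ really is a $7$-point subset of $(-1,1)$.

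\textbf{Well-definedness.} Since $330d-177>0$ for $d\ge1$, both $B_d$ and $C_d$ are real. For positivity of $C_d$, squaring shows that $13\sqrt{2d+3}>\sqrt{330d-177}$ is equivalent to $338d+507>330d-177$, which holds for all $d\ge 1$.

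To see that the seven points are distinct and lie in $(-1,1)$, I would check that $A_d$, $B_d$, $C_d$ are pairwise distinct and lie in $(0,1)$. For the upper bound, $B_d<\frac{13+\sqrt{330d}/\sqrt{2d}}{8(2d+1)}$, and the right-hand side is less than $1$ for every $d\ge1$. Distinctness reduces to comparing $A_d$ with $B_d$ and with $C_d$. Multiplying through by $8(2d+1)\sqrt{2d+3}$, the equation $A_d=C_d$ becomes $11\sqrt{2d+3}=\sqrt{330d-177}$, that is $242d+363=330d-177$, i.e.\ $d=540/88\notin\N$. Similarly, $A_d=B_d$ would force $\sqrt{330d-177}<0$, which is impossible.

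\textbf{Odd moments.} $V_d$ is symmetric under $u\mapsto -u$, so the sample averages of $u$, $u^3$, $u^5$ vanish. The weight $w_{2d}$ is even, so the corresponding integrals vanish as well. The degree-$0$ case is trivial.

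\textbf{Even moments.} It remains to match degrees $2$ and $4$ with \eqref{eq:interval-moment2} and \eqref{eq:interval-moment4}. In terms of $A_d,B_d,C_d$, these conditions read
\[
A_d+B_d+C_d=\frac{7}{2(2d+1)},\qquad A_d^2+B_d^2+C_d^2=\frac{21}{2(2d+1)(2d+3)}.
\]

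For the first, $B_d+C_d=\frac{13}{4(2d+1)}$. Adding $A_d=\frac{1}{4(2d+1)}$ gives $\frac{14}{4(2d+1)}=\frac{7}{2(2d+1)}$, as required.

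For the second, I would use
\[
B_d^2+C_d^2=\tfrac12\bigl((B_d+C_d)^2+(B_d-C_d)^2\bigr),\qquad B_d-C_d=\frac{\sqrt{330d-177}}{4(2d+1)\sqrt{2d+3}}.
\]
This gives
\[
B_d^2+C_d^2=\frac{169(2d+3)+330d-177}{32(2d+1)^2(2d+3)}=\frac{668d+330}{32(2d+1)^2(2d+3)}.
\]
Writing $A_d^2=\frac{4d+6}{32(2d+1)^2(2d+3)}$ and adding, the numerator becomes $672d+336=336(2d+1)$. The total is therefore $\frac{21}{2(2d+1)(2d+3)}$, as required.

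\textbf{Main obstacle.} The only step needing care is this fourth-moment computation. One has to see that the cross terms cancel in exactly the way that makes the numerator factor through $2d+1$; the remaining steps are routine. This also explains where $B_d$ and $C_d$ come from: they are the roots of the quadratic fixed by prescribing $B+C$ and $B^2+C^2$ once $A_d$ is chosen.
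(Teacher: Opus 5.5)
The gap is in the containment step. Your bound gives $B_d<\frac{13+\sqrt{165}}{8(2d+1)}$, but $\sqrt{165}\approx 12.85$. At $d=1$ the right-hand side is therefore $\frac{13+\sqrt{165}}{24}\approx 1.077>1$. So the claim that it is less than $1$ for every $d\ge1$ is false, and as written your argument does not show $\sqrt{B_1}<1$. The bound does work for $d\ge2$, since $13+\sqrt{165}<26<40\le 8(2d+1)$. You need to treat $d=1$ separately, exactly as the paper does (its cruder bound $B_d\le T_d$ also fails at $d=1$). The check is $B_1=\frac{13\sqrt5+\sqrt{153}}{24\sqrt5}<1$, which is equivalent to $\sqrt{153}<11\sqrt5$, i.e.\ $153<605$. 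With that one-line case added, the proof is complete.

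The rest of your argument is correct and follows the paper in substance. The paper gets $B_d+C_d=T_d=\frac{13}{4(2d+1)}$ and $B_d^2+C_d^2=T_d^2-2P_d$ by treating $B_d,C_d$ as roots of $z^2-T_dz+P_d$, with $P_d=\frac{2d+171}{16(2d+1)^2(2d+3)}$. You instead compute $B_d^2+C_d^2$ directly from $(B_d+C_d)^2+(B_d-C_d)^2$, and your numerator $672d+336=336(2d+1)$ checks out. Your distinctness check is a genuine addition: $A_d=C_d$ would force $d=135/22$, and the paper asserts ``$7$-point'' without verifying it.
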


\begin{proof}
Since $V_d=-V_d$, we have, for every $j\in\N$,
\(
\sum_{v\in V_d} v^{2j+1}=0.
\)
Therefore, for $t=5$, it suffices to verify the even moments (degrees $2,4$):
\[
\frac17\sum_{v\in V_d}v^2= M_2(d),
\quad
\frac17\sum_{v\in V_d}v^4= M_4(d),
\]
where
\[
M_2(d):=\frac{1}{\alpha_{2d}}\int_{-1}^1 u^2 w_{2d}(u)\,du,
\qquad
M_4(d):=\frac{1}{\alpha_{2d}}\int_{-1}^1 u^4 w_{2d}(u)\,du.
\]
By \eqref{eq:interval-moment2}, \eqref{eq:interval-moment4},
\[
M_2(d)=\frac{1}{2d+1},
\qquad
M_4(d)=\frac{3}{(2d+1)(2d+3)}.
\]
Next, set
\[
T_d:=\frac{7}{2}M_2(d)-A_d=\frac{13}{4(2d+1)},
\qquad
P_d:=\frac{T_d^2-\Bigl(\frac{7}{2}M_4(d)-A_d^2\Bigr)}{2}
      =\frac{2d+171}{16(2d+1)^2(2d+3)},
\]
and then $B_d$ and $C_d$ are the roots of $z^2-T_dz+P_d=0$. We show that $V_d\subset(-1,1)$.
It is clear that $A_d\in(0,1)$. Moreover, since $T_d>0$ and $P_d>0$, we have $B_d,C_d>0$.
For $d\ge2$, we have
\[
B_d\le B_d+C_d=T_d=\frac{13}{4(2d+1)}<1,
\]
hence $0<C_d<B_d<1$. For $d=1$,
\[
B_1=\frac{13\sqrt5+\sqrt{153}}{24\sqrt5}<1,
\qquad
C_1=\frac{13\sqrt5-\sqrt{153}}{24\sqrt5}>0,
\]
so again $0<C_1<B_1<1$.
Therefore $0<A_d,B_d,C_d<1$ for all $d\in\N$, and hence $V_d\subset(-1,1)$.

By $B_d+C_d=T_d$ and $B_dC_d=P_d$, we have
\[
B_d^2+C_d^2=T_d^2-2P_d=\frac{7}{2}M_4(d)-A_d^2.
\]
Therefore,
\[
A_d+B_d+C_d=\frac{7}{2}M_2(d),\qquad
A_d^2+B_d^2+C_d^2=\frac{7}{2}M_4(d).
\]
It follows that
\[
\frac17\sum_{v\in V_d} v^2=\frac{2(A_d+B_d+C_d)}{7}=M_2(d),\qquad
\frac17\sum_{v\in V_d} v^4=\frac{2(A_d^2+B_d^2+C_d^2)}{7}=M_4(d),
\]
so $V_d$ is an interval $5$-design with respect to the weight $w_{2d}$.
\end{proof}

Hence we obtain the following.

\begin{theorem}\label{thm:odd_construction}
There exists a spherical $5$-design $X\subset S^{2d}$ such that
\(
|X|=\mathcal O(d^3).
\)
\end{theorem}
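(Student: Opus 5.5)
The plan is to combine Theorem~\ref{thm:even_construction} with the Rabau--Bajnok lifting (Theorem~\ref{thm:rabaubajnok}) and the interval design of Proposition~\ref{prop:Vd_interval5-design}. First, Theorem~\ref{thm:even_construction} gives a spherical $5$-design $X_d\subset S^{2d-1}\subset\R^{2d}$ with $|X_d|=\order(d^3)$. This set is the base design $Y$ in Theorem~\ref{thm:rabaubajnok}, where the ambient dimension $2d$ plays the role of ``$d$'' in that theorem. Consequently, the interval design we need is one of strength $5$ with respect to $w_{2d}(u)=(1-u^2)^{\frac{2d-2}{2}}=(1-u^2)^{d-1}$. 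Proposition~\ref{prop:Vd_interval5-design} supplies exactly this, namely the $7$-point set $V_d\subset(-1,1)$.

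Next, we apply Theorem~\ref{thm:rabaubajnok} with $t=5$, $Y=X_d$ and $U=V_d$. This shows that
\[
X:=\bigl\{(u,\sqrt{1-u^2}\,y)\in S^{2d}\ \bigm|\ u\in V_d,\ y\in X_d\bigr\}
\]
is a spherical $5$-design on $S^{2d}\subset\R^{2d+1}$. The cardinality bound is immediate: $|X|\le |V_d|\,|X_d|=7\cdot\order(d^3)=\order(d^3)$.

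The one step needing care is that the points $(u,\sqrt{1-u^2}\,y)$ are pairwise distinct, so that $X$ is a genuine set of size $7|X_d|$ and the equal-weight average in Theorem~\ref{thm:rabaubajnok} is the same as the average over the multiset. Since $V_d\subset(-1,1)$, we have $\sqrt{1-u^2}>0$ for every $u\in V_d$. Two points with different first coordinates $u$ are clearly distinct. Two points with the same $u$ are distinct because $y\mapsto\sqrt{1-u^2}\,y$ is injective. Hence the map $V_d\times X_d\to S^{2d}$ is injective and no merging of points occurs. This is precisely why Proposition~\ref{prop:Vd_interval5-design} had to check $0<A_d,B_d,C_d<1$: that check rules out the endpoints $\pm1$, which would collapse the whole fibre over that $u$ to a single pole.

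I expect no real obstacle beyond this bookkeeping. The one point to verify carefully is the index matching in Theorem~\ref{thm:rabaubajnok}: a design on $S^{m-1}$ requires an interval design with respect to $w_m$. Here $m=2d$, so the weight is $w_{2d}$, in agreement with Proposition~\ref{prop:Vd_interval5-design}.
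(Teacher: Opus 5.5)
Your proposal is correct and follows the paper's proof: take the $\order(d^3)$-point spherical $5$-design on $S^{2d-1}$ from Theorem~\ref{thm:even_construction}, pair it with the $7$-point interval $5$-design $V_d$ for the weight $w_{2d}$, and apply Theorem~\ref{thm:rabaubajnok}. Your added check that $V_d\subset(-1,1)$ makes the map $V_d\times X_d\to S^{2d}$ injective is a detail the paper uses without comment when it writes $|X|=|U||Y|$.
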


\begin{proof}
By Theorem~\ref{thm:even_construction}, there exists a spherical $5$-design
$Y\subset S^{2d-1}$ with $|Y|=\mathcal O(d^3)$.
By Proposition~\ref{prop:Vd_interval5-design}, the set $U:=V_d\subset[-1,1]$ is a $7$-point interval $5$-design with respect to the weight $w_{2d}$.
Applying Theorem~\ref{thm:rabaubajnok} to $Y$ and $U$, we obtain a spherical $5$-design
\[
X:=\Bigl\{(u,\sqrt{1-u^2}\,y)\in S^{2d}\ \Bigm|\ u\in U,\ y\in Y\Bigr\}.
\]
In particular, $|X|=|U||Y|=\mathcal O(d^3)$.
\end{proof}

Combining Theorem~\ref{thm:even_construction} and Theorem~\ref{thm:odd_construction}, we obtain the following.

\begin{theorem}
There exists a spherical $5$-design $X\subset S^{d-1}$ such that
\(
|X|=\mathcal O(d^3).
\)
\end{theorem}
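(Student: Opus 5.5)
The statement combines Theorem~\ref{thm:even_construction} and Theorem~\ref{thm:odd_construction}, so the plan is a parity split on the ambient dimension $d$, followed by a reindexing of the asymptotic bound.

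\emph{Even case.} Suppose $d=2d'$ with $d'\ge 2$, so that the earlier results are stated in a range where they apply. Theorem~\ref{thm:even_construction}, applied with $d'$ in place of $d$, gives a spherical $5$-design $X_{d'}\subset S^{2d'-1}=S^{d-1}$ with $|X_{d'}|\le C d'^3$ for all $d'\ge d_0$.

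\emph{Odd case.} Suppose $d=2d'+1$ with $d'\ge 1$. Theorem~\ref{thm:odd_construction}, applied with $d'$ in place of $d$, gives a spherical $5$-design on $S^{2d'}=S^{d-1}$ of size $7\cdot|X_{d'}|=\order(d'^3)$. This uses Proposition~\ref{prop:Vd_interval5-design} and Theorem~\ref{thm:rabaubajnok}; the latter applies because the input sphere $S^{2d'-1}$ has ambient dimension $2d'\ge 2$.

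\emph{Reindexing.} In both cases $d'\le d/2$, so $C d'^3\le (C/8)d^3$, and therefore $|X|=\order(d^3)$ as a function of the ambient dimension $d$. Only finitely many small $d$ (for example $d=1,2$) fall outside the ranges above, and the definition of $\order$ permits discarding them by enlarging $d_0$. If a genuine design is wanted in those dimensions anyway, $\{\pm1\}\subset S^0$ and the regular hexagon on $S^1$ are $5$-designs.

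The only point that needs care is that the two constructions are uniform in $d'$. The constant $C$ from Proposition~\ref{prop:IMAEG-poly-size} must be independent of $d'$. It is: the prime-power gap $m-d'$ is bounded by Bertrand-type results, so $(m-1)^3/(m-2)\cdot$ const is $\order(d'^2)$, and multiplying by the $d'$ simplex points gives $\order(d'^3)$. The interval-design factor is exactly $7$ for every $d'$. With these checked, the two estimates combine into a single $\order(d^3)$ bound.
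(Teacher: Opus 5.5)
Your proposal is correct and is essentially the paper's argument: the paper obtains this statement by combining Theorem~\ref{thm:even_construction} for even $d$ with Theorem~\ref{thm:odd_construction} for odd $d$, and your reindexing via $d'\le d/2$ just makes that combination explicit. One minor wording point: the uniformity you re-check is already part of Proposition~\ref{prop:IMAEG-poly-size}(2), and Bertrand's postulate bounds the ratio $m/d'$ (it gives $m\le 2d'$); it does not bound the gap $m-d'$ by a constant.
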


\section{Construction of spherical $7$-designs on $S^{2d-1}$}
As shown in Figure~1 (Subsection~3.1), to construct a spherical $7$-design on $S^{2d-1}$ it suffices to construct a simplex $3$-design in $\Delta^{d-1}$.
In this section, we explicitly construct simplex $3$-designs as orbits of points in $\Delta^{d-1}$ under the natural action of the symmetric group $S_d$ on $\R^d$ by permuting coordinates.
Although the resulting designs are typically very large, we will later show that the number of points can be reduced when $d-1$ is a prime power.

For each non-negative integer $k$, we define the $k$-th power sum by
\[
p_k(x_1,\dots,x_d):=\sum_{i=1}^d x_i^k,
\]
and for $1\le k\le d$, the $k$-th elementary symmetric polynomial by
\[
e_k(x_1,\dots,x_d):=\sum_{1\le i_1<\cdots<i_k\le d} x_{i_1}\cdots x_{i_k}.
\]
It is well known that every symmetric polynomial in $\R[x_1,\dots,x_d]$ can be expressed as a polynomial in $e_1,\dots,e_d$.

The following lemma reduces the verification of the simplex $3$-design property for $S_d$-invariant sets to checking only $p_2$ and $p_3$.

\begin{lemma}\label{lem:sd-inv-simplex3}
Assume that a finite set $X\subset\Delta^{d-1}$ is $S_d$-invariant.
Then the following are equivalent:
\begin{enumerate}
\item $X$ is a simplex $3$-design.
\item \[
\frac1{|X|}\sum_{x\in X} p_2(x)=\frac{2}{d+1}
\quad\text{and}\quad
\frac1{|X|}\sum_{x\in X} p_3(x)=\frac{6}{(d+1)(d+2)}.
\]
\end{enumerate}
\end{lemma}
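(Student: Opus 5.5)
(1)$\Rightarrow$(2) is immediate: $p_2$ and $p_3$ have degree at most $3$, so their averages over $X$ equal their integrals, which Corollary~\ref{cor:simplex-p2p3-moments} gives as $\frac{2}{d+1}$ and $\frac{6}{(d+1)(d+2)}$.

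For (2)$\Rightarrow$(1), I would first reduce to symmetric polynomials. Since $X$ is $S_d$-invariant and $\rho$ is $S_d$-invariant, the functional $L(f):=\frac1{|X|}\sum_{x\in X}f(x)-\int_{\Delta^{d-1}}f\,d\rho$ satisfies $L(f\circ\sigma)=L(f)$ for all $\sigma\in S_d$. Hence $L(f)=L(\bar f)$, where $\bar f:=\frac1{d!}\sum_{\sigma\in S_d}f\circ\sigma$ is the symmetrization. Symmetrization does not raise degree, so it suffices to show $L(g)=0$ for every symmetric polynomial $g$ of degree at most $3$.

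Such a $g$ lies in the span of the symmetric monomials of degree $\le 3$, namely $1,e_1,e_1^2,e_2,e_1^3,e_1e_2,e_3$. By Newton's identities this span equals the span of $1,p_1,p_1^2,p_2,p_1^3,p_1p_2,p_3$; note that $e_3$ is $\frac16(p_1^3-3p_1p_2+2p_3)$ and in particular involves $p_3$. (If $d<3$ the element $e_3$ simply does not occur, and the argument only becomes shorter.) Both $X$ and the support of $\rho$ lie in $\Delta^{d-1}$, where $p_1\equiv1$. So every such $g$ agrees on $\Delta^{d-1}$ with a linear combination of $1$, $p_2$, $p_3$. Finally, $L(1)=0$ trivially, and $L(p_2)=L(p_3)=0$ by hypothesis (2) together with Corollary~\ref{cor:simplex-p2p3-moments}. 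This gives $L\equiv0$ on polynomials of degree $\le3$.

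The only point needing care is the symmetrization step: one must check that $L(f\circ\sigma)=L(f)$. This uses two facts, that $\sigma(X)=X$ and that $\rho$ is permutation-invariant; the latter holds because coordinate permutations preserve $\Delta^{d-1}$ and its Lebesgue measure. The remaining steps are routine.
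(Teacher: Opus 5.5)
Your proposal is correct and follows essentially the same route as the paper. Both arguments symmetrize using the $S_d$-invariance of $X$ and $\rho$, write symmetric polynomials of degree $\le 3$ in terms of $p_1,p_2,p_3$, and use $p_1\equiv 1$ on $\Delta^{d-1}$ to reduce to $1,p_2,p_3$. The only difference is that the paper justifies the permutation invariance of $\rho$ through the symmetry of the moment formula, whereas you argue directly from the Lebesgue measure; this is immaterial.
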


\begin{proof}
Assume (1).
Applying Definition~\ref{def:simplex_t_design} with $t=3$ to $f=p_2$ and $f=p_3$, and using
\eqref{eq:int-simplex-p2} and \eqref{eq:int-simplex-p3}, we obtain (2).
Thus it remains to prove (2)$\Rightarrow$(1).
For any polynomial $f\in\R[x_1,\dots,x_d]$ of degree $\le3$, define
\[
f^{\mathrm{sym}}(x):=\frac1{d!}\sum_{\sigma\in S_d} f(\sigma x).
\]
Since $X$ is $S_d$-invariant, we have
\[
\sum_{x\in X} f(x)=\sum_{x\in X} f^{\mathrm{sym}}(x).
\]
Moreover, the right-hand side of Proposition~\ref{prop:simplex-moment} is invariant under permutations of $(k_1,\dots,k_d)$.
Hence, for any $\sigma\in S_d$ and any polynomial $f$,
\[
\int_{\Delta^{d-1}} f(\sigma x)\,d\rho=\int_{\Delta^{d-1}} f(x)\,d\rho,
\]
and therefore
\[
\int_{\Delta^{d-1}} f\,d\rho=\int_{\Delta^{d-1}} f^{\mathrm{sym}}\,d\rho.
\]
Consequently, to prove that $X$ is a simplex $3$-design, it suffices to show that for every symmetric polynomial $g$ of degree $\le 3$,
\[
\frac1{|X|}\sum_{x\in X} g(x)=\int_{\Delta^{d-1}} g\,d\rho.
\]
If $g$ is a symmetric polynomial of degree $\le3$, then $g$ is a polynomial in $p_1$, $p_2$, $p_3$.
Since $p_1\equiv1$ in $\Delta^{d-1}$, any such $g$ can be written as
\[
g(x)=a+b\,p_2(x)+c\,p_3(x)\qquad
\text{for some }a,b,c\in\R.
\]
On the other hand, by \eqref{eq:int-simplex-p2} and \eqref{eq:int-simplex-p3}, and by the assumption (2),
the desired equality holds for $p_2$ and $p_3$, hence for all such $g$ by linearity.
Therefore $X$ is a simplex $3$-design.
\end{proof}
We now explicitly construct simplex $3$-designs as orbits $S_d\cdot x\subset\Delta^{d-1}$ under the natural coordinate-permutation action of $S_d$.
For a real number $\alpha$ and an integer $m\ge0$, we write $\alpha^{\,(m)}$ for the $m$-tuple $(\alpha,\dots,\alpha)$.
Thus, for example,
\[
(a,b^{\,(q)},c^{\,(q)})
:=
\bigl(a,\underbrace{b,\ldots,b}_{q\ \text{times}},\underbrace{c,\ldots,c}_{q\ \text{times}}\bigr)\in\R^{2q+1},
\]
and
\[
(a,b^{\,(q-1)},c^{\,(q-1)},0)
:=
\bigl(a,\underbrace{b,\ldots,b}_{q-1\ \text{times}},\underbrace{c,\ldots,c}_{q-1\ \text{times}},0\bigr)\in\R^{2q}.
\]
\begin{theorem}\label{thm:simplex3design}
Let $d\ge3$.
Define a point $x\in\Delta^{d-1}$ and the $S_d$-orbit $X_d:=S_d\cdot x$ as follows.
\begin{enumerate}
\item[(i)] If $d=2q+1$ for some $q\ge1$, define
\begin{align*}
P(s)
&=(q+1)^2(2q+1)(2q+3)\,s^3
-6(q+1)^2(2q+3)\,s^2
+3(2q+3)^2\,s
-2(2q+5).
\end{align*}
and
\[
s_-:=\frac{2(q+1)-\sqrt{2(q+1)}}{(2q+1)(q+1)}.
\]
Choose a root $s\in\bigl(s_-,\,\frac1{q+1}\bigr)$ of $P(s)=0$, and set
\[
a:=1-qs,\qquad
t:=\frac{((q+1)s-1)^2}{2(q+1)},\qquad
D:=s^2-4t,\qquad
b:=\frac{s+\sqrt{D}}2,\quad c:=\frac{s-\sqrt{D}}2.
\]
Let $x:=(a,b^{\,(q)},c^{\,(q)})$.

\item[(ii)] If $d=2q$ for some $q\ge3$, define
\begin{align*}
P(s)
&:= q(q-1)(q+1)(2q-1)(2q+1)\, s^3
 -6q(q-1)(q+1)(2q+1)\, s^2 \\
&\hspace{3.8em}
 +3(q+1)(4q^2-3)\, s
 -2(q+2)(2q-1).
\end{align*}
and
\[
s_-=\frac{2(q-1)(2q+1)\;-\;\sqrt{\,2(q-1)(2q-3)(2q+1)\,}}{(q-1)(2q-1)(2q+1)}.
\]
Choose a root $s\in\bigl(s_-,\,\frac1q\bigr)$ of $P(s)=0$, and set
\[
a:=1-(q-1)s,\qquad
t=\frac{q(q-1)(2q+1)s^2-2(q-1)(2q+1)s+(2q-1)}{2(q-1)(2q+1)},
\]
\[
D:=s^2-4t,\qquad
b:=\frac{s+\sqrt{D}}2,\quad c:=\frac{s-\sqrt{D}}2.
\]
Let $x:=(a,b^{\,(q-1)},c^{\,(q-1)},0)$.

\item[(iii)] If $d=4$, let $a\in(0,\tfrac12)$ be a root of
\[
120a^3-90a^2+18a-1=0
\]
such that the quantities $t$ and $D$ defined below satisfy $t>0$ and $D>0$.
Set $s:=1-2a$ and
\[
t:=\frac{2a^2+s^2-\frac{2}{5}}{2},\qquad
D:=s^2-4t,\qquad
b:=\frac{s+\sqrt D}{2},\quad c:=\frac{s-\sqrt D}{2},
\]
and let $x:=(a,a,b,c)$.
\end{enumerate}
Then $x\in\Delta^{d-1}$ and $X_d:=S_d\cdot x$ is a simplex $3$-design.
\end{theorem}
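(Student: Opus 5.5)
The proof will rest on Lemma~\ref{lem:sd-inv-simplex3}. Since $X_d=S_d\cdot x$ is $S_d$-invariant by construction and $p_2,p_3$ are symmetric, every point of $X_d$ has the same power sums as $x$. So in each case it suffices to check three things: $x\in\Delta^{d-1}$ (coordinates real, nonnegative, summing to $1$); $p_2(x)=\frac{2}{d+1}$; and $p_3(x)=\frac{6}{(d+1)(d+2)}$.

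For each case I will write $p_1,p_2,p_3$ of $x$ in terms of $a$ and the symmetric functions $s=b+c$, $t=bc$. I use $b^2+c^2=s^2-2t$ and $b^3+c^3=s^3-3st$.

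\textbf{Case (i).} With $m=q$ copies of $b$ and of $c$:
\begin{align*}
p_1 &= a+qs,\\
p_2 &= a^2+q(s^2-2t),\\
p_3 &= a^3+q(s^3-3st).
\end{align*}
The choice $a=1-qs$ gives $p_1=1$. The formula for $t$ should be exactly the solution of $p_2=\frac{2}{2q+2}=\frac1{q+1}$. I will verify this by substituting:
\[
a^2+qs^2-\frac1{q+1}=\frac{((q+1)s-1)^2}{q+1},
\]
which should give $2qt$ with the factor $q$ absorbed correctly. If there is a mismatch I will recompute, but the intended identity is $p_2=\frac1{q+1}$.

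Substituting $a$ and $t$ into $p_3-\frac{6}{(2q+2)(2q+3)}$ gives a cubic in $s$. I expect it to be a nonzero constant multiple of $P(s)$, so any root of $P$ yields $p_3=\frac{6}{(d+1)(d+2)}$. This is a routine expansion.

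\textbf{Case (ii)} is identical with $q-1$ copies, the extra zero coordinate, and target values $\frac{2}{2q+1}$ and $\frac{6}{(2q+1)(2q+2)}$.

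\textbf{Case (iii).} Here $p_1=2a+s=1$ and $p_2=2a^2+s^2-2t=\frac25$, which defines $t$. The condition $p_3=2a^3+s^3-3st=\frac1{5}$ should reduce, after substituting $s=1-2a$, to $120a^3-90a^2+18a-1=0$.

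The main obstacle is membership in the simplex, that is, realness and nonnegativity: $D\ge0$, $c\ge0$ (equivalently $t\ge0$ together with $s\ge0$), and $a\ge0$. In case (i), $s<\frac1{q+1}$ gives $a=1-qs>\frac1{q+1}>0$, and $t\ge0$ is automatic from the square. The condition $D=s^2-4t>0$ is a quadratic inequality in $s$. Its larger root should be $\frac1{q+1}$ (there $D=s^2>0$; more precisely I will compute the roots of $s^2-\frac{2((q+1)s-1)^2}{q+1}$), and I expect the smaller root to be exactly $s_-$. Thus $D>0$ holds precisely on $(s_-,\frac1{q+1})$, and then $b,c$ are real with $c=\frac{s-\sqrt D}{2}\ge0$ because $t\ge0$.

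The existence of a root of $P$ in this interval must also be established. I will show a sign change of $P$ between $s_-$ and $\frac1{q+1}$ and apply the intermediate value theorem. At $s=\frac1{q+1}$ we get $t=0$, so $x$ has zeros and $p_3$ can be computed directly; comparing that value with the target gives the sign of $P$. At $s=s_-$ we have $b=c$, and by the power mean inequality I expect $p_3$ to fall on the other side of the target.

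Case (ii) runs in parallel. The quadratic $t(s)$ now has positive discriminant behaviour that must be checked, so $t>0$ on the interval. The restriction $q\ge3$ is presumably what makes $s_->0$ and the sign change hold.

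In case (iii), the positivity conditions are assumed in the statement. The only remaining work is to note that such a root exists, which can be checked numerically for the explicit cubic, and that $c\ge0$ follows from $t>0$ and $s=1-2a>0$.
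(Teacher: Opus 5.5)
Your skeleton matches the paper's proof. You reduce via Lemma~\ref{lem:sd-inv-simplex3} to $p_2,p_3$, write these in terms of $a$, $s=b+c$, $t=bc$, and identify the $p_3$ condition with $P(s)=0$. Membership in $\Delta^{d-1}$ comes from the sign of the concave quadratic $D(s)$, and a root of $P$ comes from the intermediate value theorem. Your right endpoint in case (i) is also fine: at $s=\frac1{q+1}$, $x$ has $q+1$ entries $\frac1{q+1}$ and $q$ zeros, so $P(\frac1{q+1})=\frac2{q+1}>0$.

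\textbf{The gap is the other endpoint.} No power-mean-type inequality can fix the sign of $P(s_-)$. Any lower bound for $p_3$ in terms of $p_1,p_2$ alone is already violated by the point at $s=\frac1{q+1}$, which has the same $p_1=1$, $p_2=\frac1{q+1}$ but $p_3=\frac1{(q+1)^2}<\frac{3}{(q+1)(2q+3)}$. For example, Cauchy--Schwarz gives only $p_3\ge p_2^2$, which is below the target.

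You must evaluate at $s_-$ explicitly. In case (i), put $u=1/\sqrt{2(q+1)}$. Then $b=c=\frac{1-u}{2q+1}$, $a=\frac{1+2qu}{2q+1}$, and
\[
p_3(x)-\frac{3}{(q+1)(2q+3)}=\frac{q(2q-1)\bigl((2q+3)u-2\bigr)}{(q+1)(2q+1)^2(2q+3)}>0,
\]
because $(2q+3)^2-8(q+1)=(2q+1)^2>0$. This is the paper's $P(s_-)<0$.

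In case (ii), ``runs in parallel'' hides the hardest step of the whole proof.
\begin{itemize}
\item At $s=\frac1q$ you get $t=\frac1{2q(q-1)(2q+1)}\neq0$, so the zero-coordinate shortcut is unavailable. Direct substitution gives $P(\frac1q)=\frac{(2q-1)(q-1)}{q^2}$.
\item At $s_-$ the paper finds $P(s_-)=\frac{A(q)+\sqrt{2(q-1)(2q-3)(2q+1)}\,B(q)}{(q-1)(2q-1)^2(2q+1)}$, with $A(q)=2(q-1)(2q+1)(4q^2-12q+11)$ and $B(q)=-(q+1)(2q-3)^2$.
\item After squaring, negativity reduces to $F(q)=(2q^2-6q+1)^2+q(7q-20)>0$. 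This holds for $q\ge3$ but fails at $q=2$.
\item At $q=2$, $P(s_-)=\frac{30-3\sqrt{10}}{45}>0$ and $P$ is increasing on $(s_-,\frac12)$, so that ansatz has no admissible root at all.
\end{itemize}
This is why $d=4$ needs the separate family (iii). It is not about $s_->0$, which also holds at $q=2$. So no argument at $s_-$ that is uniform in $q$ can work; the explicit computation that uses $q\ge3$ is the missing ingredient.

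Smaller corrections:
\begin{itemize}
\item In case (i) the larger root of $D$ is $s_+=\frac{2(q+1)+\sqrt{2(q+1)}}{(2q+1)(q+1)}>\frac1{q+1}$, not $\frac1{q+1}$; you note yourself that $D(\frac1{q+1})>0$. So $D>0$ on $(s_-,\frac1{q+1})$ because this interval lies inside $(s_-,s_+)$, not ``precisely''.
\item In case (ii), $t>0$ for every $s$ because the numerator of $t$ has discriminant $-4(q-1)(2q+1)<0$, i.e.\ $t=\frac q2(s-\frac1q)^2+\frac1{2q(q-1)(2q+1)}$. It is not a matter of ``positive discriminant behaviour''.
\item Your displayed identity in case (i) is missing a factor $q$: $a^2+qs^2-\frac1{q+1}=\frac{q((q+1)s-1)^2}{q+1}=2qt$.
\end{itemize}
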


\begin{proof}
Throughout the proof, we use the identities
\begin{equation}\label{eq:bc-identities}
b+c=s,\quad bc=t,\quad b^2+c^2=s^2-2t,\quad b^3+c^3=s^3-3st,
\end{equation}
where $b,c$ are the roots of $z^2-sz+t=0$.
For the purpose of the proof, we regard $D=D(s)$ as a quadratic function in $s$.

\textbf{(i)}
Set
\[
s_+=\frac{2(q+1)+\sqrt{2(q+1)}}{(2q+1)(q+1)}.
\]
A direct computation shows that $D(s_+)=D(s_-)=0$.

Next,
\[
P\Bigl(\frac1{q+1}\Bigr)=\frac{2}{q+1}>0,
\]
and a direct computation gives
\[
P\bigl(s_-\bigr)
=
-\frac{(2q-1)\bigl((2q+3)\sqrt{2(q+1)}-4(q+1)\bigr)}{(q+1)(2q+1)^2}<0.
\]
Therefore, by the intermediate value theorem, there exists
\[
s\in\Bigl(s_-,\frac1{q+1}\Bigr)\ \text{with}\ P(s)=0.
\]
Moreover,
\[
s_+>\frac{2(q+1)}{(2q+1)(q+1)}
>\frac1{q+1},
\]
so $s\in\bigl(s_-,\,\frac1{q+1}\bigr)\subset (s_-,s_+)$ and hence $D(s)>0$.

Since $s>s_->0$ and $t>0$, we have $b,c>0$.
Since $s<\frac1{q+1}$,
\[
a>1-\frac{q}{q+1}=\frac1{q+1}>0,\qquad
a+q(b+c)=1,
\]
so $x=(a,b^{\,(q)},c^{\,(q)})\in\Delta^{d-1}$.

Finally, using (\ref{eq:bc-identities}), we obtain, 
\[
p_2(x)=a^2+q(b^2+c^2)=(1-qs)^2+q(s^2-2t)=\frac{2}{d+1}.
\]
Moreover,
\[
p_3(x)=a^3+q(b^3+c^3)=(1-qs)^3+q(s^3-3st),
\]
and a further computation shows that
\[
p_3(x)-\frac{6}{(d+1)(d+2)}
=
-\frac{q}{2(q+1)(2q+3)}\,P(s)=0.
\]
Thus $p_3(x)=\frac{6}{(d+1)(d+2)}$.
Now, Lemma~\ref{lem:sd-inv-simplex3} yields that $X_d$ is a simplex $3$-design.

\textbf{(ii)}
Set
\[
s_+=\frac{2(q-1)(2q+1)\;+\;\sqrt{\,2(q-1)(2q-3)(2q+1)\,}}{(q-1)(2q-1)(2q+1)}.
\]
A direct computation shows that $D(s_-)=D(s_+)=0$.

Next,
\[
P\Bigl(\frac{1}{q}\Bigr)=\frac{(2q-1)(q-1)}{q^2}>0.
\]
and a direct computation gives
\[
P(s_-)
=\frac{A(q)+\sqrt{\,2(q-1)(2q-3)(2q+1)\,}\,B(q)}{(q-1)(2q-1)^2(2q+1)}.
\]
where
\[
A(q)=2(q-1)(2q+1)(4q^2-12q+11),\qquad
B(q)=-(q+1)(2q-3)^2.
\]
For $q\ge3$ we have $B(q)<0$, so to prove $P(s_-)<0$ it suffices to show
\[
\sqrt{2(q-1)(2q-3)(2q+1)}\,(-B(q))>A(q).
\]
Squaring both sides, which are positive, reduces this to
\[
2(q-1)(2q-3)(2q+1)B(q)^2-A(q)^2
=2(q-1)(2q-1)^3(2q+1)\,F(q),
\]
where
\[
F(q)=(2q^2-6q+1)^2+q(7q-20)>0.
\]
Hence $P(s_-)<0$.
Therefore, by the intermediate value theorem, there exists
\[
s\in\Bigl(s_-,\frac1q\Bigr)\ \text{with}\ P(s)=0.
\]
Moreover,
\[
q s_+-1
=\frac{(q-1)(2q+1)\;+\;q\sqrt{\,2(q-1)(2q-3)(2q+1)\,}}{(q-1)(2q-1)(2q+1)} \;>\;0,
\]
so $s_+>\frac1q$ and hence $s\in(s_-,\frac1q)\subset(s_-,s_+)$, which implies $D(s)>0$.

Also,
\[
t=\frac{q}{2}\Bigl(s-\frac1q\Bigr)^2+\frac{1}{2q(q-1)(2q+1)}>0.
\]
Thus $b,c>0$.
Since $s<\frac1q$, we have
\[
a>\frac1q>0,
\qquad
a+(q-1)(b+c)+0=1,
\]
so $x=(a,b^{(q-1)},c^{(q-1)},0)\in\Delta^{d-1}$.

Finally, using (\ref{eq:bc-identities}), we obtain,
\[
p_2(x)=a^2+(q-1)(b^2+c^2)=\frac{2}{d+1}.
\]
Moreover, the condition $P(s)=0$ is equivalent to
\[
p_3(x)=a^3+(q-1)(b^3+c^3)=\frac{6}{(d+1)(d+2)}.
\]
Now, Lemma~\ref{lem:sd-inv-simplex3} yields that $X_d$ is a simplex $3$-design.

\textbf{(iii)}
Clearly, $2a+b+c=2a+s=1$ and $a>0$. Since $t>0$ and $D>0$, we have $b,c>0$.
Therefore $x\in\Delta^3$.
Since $X_4$ is $S_4$-invariant, by Lemma~\ref{lem:sd-inv-simplex3}, it suffices to show
$p_2(x)=\frac25$ and $p_3(x)=\frac15$.
Using (\ref{eq:bc-identities}), we obtain
\[
p_2(x)=2a^2+s^2-2t=\frac25
\]
by the definition of $t$, and
\[
p_3(x)=2a^3+s^3-3st.
\]
Substituting $s=1-2a$ and $t=\frac{2a^2+s^2-\frac25}{2}$ yields
\[
p_3(x)-\frac15=\frac1{10}\bigl(120a^3-90a^2+18a-1\bigr)=0,
\]
hence $p_3(x)=\frac15$. Therefore $X_4$ is a simplex $3$-design.
\end{proof}

By Theorem~\ref{thm:simplex3design}, we obtain an explicit construction of simplex $3$-designs in every dimension $d\ge3$.

\begin{theorem}\label{thm:sph7-existence-general}
There exists a spherical $7$-design $X_d\subset S^{2d-1}$ such that
\[
|X_d|=\mathcal{O}\!\left(2^{d}d^{9/2}\right).
\]
\end{theorem}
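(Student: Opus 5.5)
The plan is to run the pipeline of Figure~\ref{fig:pipeline} with $t=3$ and $d'=d$, using the simplex $3$-designs of Theorem~\ref{thm:simplex3design} as input, and then to count points.

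\emph{Step 1: the simplex input.} For $d\ge 3$, let $Y:=X_d=S_d\cdot x\subset\Delta^{d-1}$ be the simplex $3$-design of Theorem~\ref{thm:simplex3design}. Its size is at most $d!/|\mathrm{Stab}(x)|$, and the stabilizer contains the permutations that shuffle equal coordinates.
\begin{itemize}
\item[(i)] For $d=2q+1$ and $x=(a,b^{(q)},c^{(q)})$, we get $|Y|\le (2q+1)\binom{2q}{q}$. Stirling's bound $\binom{2q}{q}\le 4^q/\sqrt{\pi q}$ gives $|Y|=\order(2^d d^{1/2})$.
\item[(ii)] For $d=2q$ and $x=(a,b^{(q-1)},c^{(q-1)},0)$, we get $|Y|\le \frac{(2q)!}{(q-1)!^2}=2q(2q-1)\binom{2q-2}{q-1}$. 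This is $\order(2^d d^{3/2})$.
\item[(iii)] The case $d=4$ is a single dimension and is irrelevant for the asymptotics.
\end{itemize}
Coincidences among $a,b,c$ only shrink the orbit, so these upper bounds suffice. The set-valued orbit is still $S_d$-invariant, which is all Lemma~\ref{lem:sd-inv-simplex3} needs. The finitely many small $d$ are absorbed into the constant of $\order$.

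\emph{Step 2: the toric input and the $\CP^{d-1}$-design.} Take a projective toric $3$-design $X\subset P(\T^d)$ from Proposition~\ref{prop:IMAEG-poly-size}. In the general case $|X|=\frac{(m-1)^4-1}{m-2}$, where $m\ge d$ is minimal with $m-1$ a prime power. By Bertrand's postulate there is a prime in $[d,2d]$, so $m\le 2d+1$ and $|X|=\order(d^3)$. Proposition~\ref{prop:concatenation} then gives a $3$-design $Z=\pi(Y\times X)\subset\CP^{d-1}$ with $|Z|\le|Y||X|=\order(2^d d^{3/2}\cdot d^3)=\order(2^d d^{9/2})$. The odd case gives the better bound $\order(2^d d^{7/2})$.

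\emph{Step 3: fusion frame and lifting.} Proposition~\ref{prop:proj-to-ff} turns $Z$ into a $\TFF_3$ $\mathcal F\subset G_{2,2d}$ with $|\mathcal F|\le|Z|$. For each $V\in\mathcal F$, place the vertices of a regular octagon on $S(V)\simeq S^1$; this is a spherical $7$-design by \cite{DGS1977}. Corollary~\ref{cor:lifting} with $t=3$ and $s=7$ yields, after suitable rotations $g_V$, a spherical $\min\{7,2\cdot3+1\}=7$-design $X_d=\bigsqcup_V g_VY_V\subset S^{2d-1}$ of size $8|\mathcal F|=\order(2^d d^{9/2})$.

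The only genuinely non-routine points are the orbit-size estimate and the polynomial bound on $m$. The main one I would double-check is the orbit count in the even case, since it dominates the exponent $9/2$. I expect no real obstacle, because the design properties are entirely supplied by the cited results.
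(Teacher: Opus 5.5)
Your proposal is correct and follows essentially the same route as the paper: the $S_d$-orbit simplex $3$-design of Theorem~\ref{thm:simplex3design}, combined with a projective toric $3$-design of size $\order(d^3)$ via Proposition~\ref{prop:concatenation}, then Proposition~\ref{prop:proj-to-ff}, an octagon on each plane, and Corollary~\ref{cor:lifting}, with the Stirling estimate $|Y|=\order(2^d d^{3/2})$ dominating in the even case. Your Bertrand's-postulate bound $m\le 2d+1$ and your treatment of the orbit sizes as upper bounds are small justifications that the paper leaves implicit.
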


\begin{proof}
Let $Y\subset\Delta^{d-1}$ be the simplex $3$-design obtained in
Theorem~\ref{thm:simplex3design}.
Let $X\subset P(\mathrm T^{d})$ be a projective toric $3$-design given by
Proposition~\ref{prop:IMAEG-poly-size}\,(2), so that $|X|=\mathcal O(d^3)$.
Define $Z:=\pi(Y\times X)\subset \CP^{d-1}$ as in Proposition~\ref{prop:concatenation}$;$ then $Z$ is a complex projective $3$-design and
\[
|Z|\le |Y||X|.
\]
Applying Proposition~\ref{prop:proj-to-ff} to $Z$, we obtain a tight $3$-fusion frame
\(
\mathcal F\subset G_{2,2d}
\)
with
\[
|\mathcal F|\le |Z|\le |Y||X|.
\]
For each $V\in\mathcal F$, let $Y_V\subset S(V)\simeq S^1$ be the vertex set of a regular octagon.
Then $Y_V$ is a spherical $7$-design on $S(V)$, and hence Corollary~\ref{cor:lifting} yields a spherical $7$-design
\[
X_d:=\bigsqcup_{V\in\mathcal F} g_V Y_V \ \subset\ S^{2d-1}
\]
for suitable $g_V\in O(V)$.
In particular, $|X_d|=8|\mathcal F|$.

It remains to estimate $|Y|$.
Since $Y$ is an $S_d$-orbit, we have
\[
|Y|=
\begin{cases}
\dfrac{(2q+1)!}{(q!)^2}=(2q+1)\dbinom{2q}{q}
& \text{if } d=2q+1\text{ is odd},\\[2mm]
\dfrac{(2q)!}{((q-1)!)^2}=2q(2q-1)\dbinom{2q-2}{q-1}
& \text{if } d=2q \text{ is even}.
\end{cases}
\]
By Stirling's formula, $\binom{2n}{n}=\order(4^n/\sqrt n)$, hence regardless of the parity of $d$,
\[
|Y|=\mathcal O\!\left(2^{d}d^{3/2}\right).
\]
Consequently,
\[
|X_d|
=8|\mathcal F|
\le 8|Y||X|
=\mathcal O\!\left(2^{d}d^{9/2}\right).
\]
\end{proof}

\begin{remark}
Since the proof of Theorem~\ref{thm:sph7-existence-general} does not cover the exceptional case $d=4$, we treat this case separately.
Let $Y\subset\Delta^{3}$ be the simplex $3$-design in Theorem~\ref{thm:simplex3design}~(iii). Then $|Y|=12$.
Since $d-1=3$ is a prime power, Proposition~\ref{prop:IMAEG-poly-size}~(1) gives a projective toric $3$-design
$X\subset P(\mathrm T^{4})$ with $|X|=40$.
Hence $Z=\pi(Y\times X)\subset\CP^{3}$ satisfies $|Z|\le |Y||X|=480$, and Proposition~\ref{prop:proj-to-ff} yields a $\TFF_3$
$\mathcal F\subset G_{2,8}$ with $|\mathcal F|\le 480$.
Finally, Corollary~\ref{cor:lifting} with a regular octagon on each $V\in\mathcal F$ produces a spherical $7$-design on $S^{7}$ with
\[
|X_4|=8|\mathcal F|\le 3840.
\]
\end{remark}

\subsection{Thinning by a $3$-transitive subgroup}

Let
\[
\Omega_k
:=
\Bigl\{\, (i_1,\dots,i_k)\in I_d^k
\ \Bigm|\ 
i_r\neq i_s\ \text{for all}\ 1\le r<s\le k
\Bigr\}.
\]
Then
\[
|\Omega_k|=(d)_k:=d(d-1)\cdots(d-k+1).
\]

\begin{lemma}\label{lem:ktrans-avg}
Let $1\le k\le d$ and $G\le S_d$ be $k$-transitive on $I_d$.
Then for any function $\Phi:\Omega_k\to\mathbb R$ and any $\mathbf i\in\Omega_k$,
\[
\frac1{|G|}\sum_{g\in G}\Phi(g^{-1}\mathbf i)
=\frac1{(d)_k}\sum_{\mathbf j\in\Omega_k}\Phi(\mathbf j).
\]
\end{lemma}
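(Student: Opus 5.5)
We will prove the identity by a standard orbit-counting argument. Here $S_d$ acts on $\Omega_k$ coordinatewise, $g(i_1,\dots,i_k)=(g(i_1),\dots,g(i_k))$. This action preserves $\Omega_k$ because $g$ is a bijection, so distinct entries stay distinct. By definition, $G$ being $k$-transitive on $I_d$ means exactly that $G$ acts transitively on $\Omega_k$.

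\textbf{Step 1: rewrite the left-hand side.} Fix $\mathbf i\in\Omega_k$ and consider the orbit map $G\to\Omega_k$, $g\mapsto g^{-1}\mathbf i$. Since $g\mapsto g^{-1}$ is a bijection of $G$, we have
\[
\sum_{g\in G}\Phi(g^{-1}\mathbf i)=\sum_{g\in G}\Phi(g\mathbf i).
\]
By transitivity the map $g\mapsto g\mathbf i$ is surjective onto $\Omega_k$. Its fibres are cosets $hG_{\mathbf i}$ of the stabilizer $G_{\mathbf i}$: indeed $g\mathbf i=h\mathbf i$ if and only if $h^{-1}g\in G_{\mathbf i}$. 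Hence every $\mathbf j\in\Omega_k$ is hit exactly $|G_{\mathbf i}|$ times, and
\[
\sum_{g\in G}\Phi(g\mathbf i)=|G_{\mathbf i}|\sum_{\mathbf j\in\Omega_k}\Phi(\mathbf j).
\]

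\textbf{Step 2: identify the constant.} The orbit--stabilizer theorem, applied to the single orbit $\Omega_k$, gives $|G|=|G_{\mathbf i}|\,|\Omega_k|=|G_{\mathbf i}|\,(d)_k$, using $|\Omega_k|=(d)_k$ as stated just before the lemma. Dividing the identity of Step 1 by $|G|$ yields the claim.

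There is no genuine obstacle here. The only point needing care is that the fibres of the orbit map all have the same size $|G_{\mathbf i}|$, which is exactly what lets us replace the average over $G$ with the uniform average over $\Omega_k$.
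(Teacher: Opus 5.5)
Your proof is correct: reindexing by $g\mapsto g^{-1}$, observing that the fibres of $g\mapsto g\mathbf i$ are the cosets $hG_{\mathbf i}$ of the stabilizer, and using $|G|=|G_{\mathbf i}|\,(d)_k$ is the standard orbit--stabilizer argument. The paper only says ``Immediate,'' and your write-up is the routine verification that word refers to, with the same coordinatewise action convention the paper uses later in Proposition~\ref{prop:3trans-suborbit}.
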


\begin{proof}
Immediate.
\end{proof}

\begin{proposition}\label{prop:3trans-suborbit}
Let $d\ge 3$.
Assume that $x\in\Delta^{d-1}$ and the $S_d$-orbit $X:=S_d\cdot x$ is a simplex $3$-design.
Let $G\le S_d$ be a $3$-transitive subgroup on $I_d$ and set $Y:=G\cdot x$.
Then $Y$ is a simplex $3$-design.
\end{proposition}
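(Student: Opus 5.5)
Since $Y=G\cdot x\subseteq\Delta^{d-1}$, I will work with the weighted average $\frac1{|G|}\sum_{g\in G}f(gx)$ in place of the uniform average over $Y$. These two coincide, because each point of $Y$ is hit exactly $|G|/|G_x|$ times by the map $g\mapsto gx$. The same fact for $S_d$ gives $\frac1{|X|}\sum_{y\in X}f(y)=\frac1{d!}\sum_{\sigma\in S_d}f(\sigma x)$. It therefore suffices to show that for every monomial $f$ of degree at most $3$,
\[
\frac1{|G|}\sum_{g\in G}f(gx)=\frac1{d!}\sum_{\sigma\in S_d}f(\sigma x).
\]
The right-hand side equals $\int_{\Delta^{d-1}} f\,d\rho$ by the hypothesis on $X$.

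Every monomial of degree at most $3$ involves at most three distinct variables. I will write it as $f(y)=y_{i_1}^{k_1}y_{i_2}^{k_2}y_{i_3}^{k_3}$ with $\mathbf i=(i_1,i_2,i_3)\in\Omega_3$ and $k_r\ge0$. If fewer than three variables occur, I pad $\mathbf i$ with arbitrary distinct indices carrying exponent $0$; this is possible since $d\ge3$. With the convention $(gx)_i=x_{g^{-1}(i)}$ we get
\[
f(gx)=\Phi(g^{-1}\mathbf i),\qquad \Phi(\mathbf j):=x_{j_1}^{k_1}x_{j_2}^{k_2}x_{j_3}^{k_3},
\]
where $\Phi$ is a function on $\Omega_3$ and $g^{-1}\mathbf i$ denotes the componentwise action.

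Lemma~\ref{lem:ktrans-avg} with $k=3$ applies to $G$ because $G$ is $3$-transitive, and it applies to $S_d$ because $S_d$ is $3$-transitive for $d\ge3$. Both averages therefore equal $\frac1{(d)_3}\sum_{\mathbf j\in\Omega_3}\Phi(\mathbf j)$, so they agree. By linearity the agreement extends to all polynomials of degree at most $3$, and so $Y$ is a simplex $3$-design.

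The only point needing care is the padding and action convention, which makes the reduction to Lemma~\ref{lem:ktrans-avg} legitimate; everything else is formal.
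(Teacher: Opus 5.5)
Your proposal is correct and follows essentially the same route as the paper: reduce to monomials, write $f(gx)=\Phi(g^{-1}\mathbf i)$ for a function $\Phi$ on ordered tuples of distinct indices, and apply Lemma~\ref{lem:ktrans-avg} to both $G$ and $S_d$. The differences are cosmetic. You pad every monomial to three indices instead of using $k$-transitivity for $k\le 3$. You also state explicitly the orbit--stabilizer step that equates the uniform average over $Y$ with the average over $G$, which the paper leaves implicit.
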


\begin{proof}
It suffices to show that for every polynomial $f\in\mathbb R[y_1,\dots,y_d]$ of degree $\le 3$,
\[
\frac1{|G|}\sum_{g\in G} f(g\cdot y)
=\frac1{|S_d|}\sum_{\sigma\in S_d} f(\sigma\cdot y).
\]
By linearity, we may assume that $f=m$ is a monomial of degree $\le 3$.
Then
\[
m(y)=\prod_{t=1}^k y_{i_t}^{a_t},
\quad \text{for some } k\in I_3,\ \mathbf{i}=(i_1,\dots,i_k)\in\Omega_k,\ a_1,\dots,a_k\in\Z_{>0}
\text{ with }\sum_{t=1}^k a_t\le 3.
\]
Define $\Phi:\Omega_k\to\mathbb R$ by
\[
\Phi(j_1,\dots,j_k):=\prod_{t=1}^k x_{j_t}^{a_t}.
\]
For $g\in S_d$, since $(g\cdot x)_{i_t}=x_{g^{-1}(i_t)}$, we have $m(g\cdot x)=\Phi(g^{-1}\mathbf i)$.
As $G$ is $3$-transitive and $k\le 3$, it is $k$-transitive, hence Lemma~\ref{lem:ktrans-avg} gives
\[
\frac1{|G|}\sum_{g\in G} m(g\cdot x)
=\frac1{(d)_k}\sum_{\mathbf j\in\Omega_k}\Phi(\mathbf j).
\]
Similarly, applying Lemma~\ref{lem:ktrans-avg} to $S_d$ which is also $k$-transitive, we obtain the same right-hand side, and hence
\[
\frac1{|G|}\sum_{g\in G} m(g\cdot x)
=\frac1{|S_d|}\sum_{\sigma\in S_d} m(\sigma\cdot x).
\]
This proves the desired identity for all $f$ of degree $\le 3$, and hence $Y$ is a simplex $3$-design.
\end{proof}

\begin{corollary}\label{cor:pgl-thinning-simplex3}
Assume that $d-1=q$ is a prime power and identify $I_d$ with $\mathbb P^1(\mathbb F_q)$.
Let $x\in\Delta^{d-1}$ be such that $S_d\cdot x$ is a simplex $3$-design.
Let $G:=\mathrm{PGL}(2,q)\le S_d$ act naturally, and set $Y:=G\cdot x$.
Then $Y$ is a simplex $3$-design. In particular,  $|Y|=\mathcal{O}(d^3)$.
\end{corollary}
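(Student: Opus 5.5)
The plan is to reduce the statement to Proposition~\ref{prop:3trans-suborbit}. Everything then comes down to two facts: that $\mathrm{PGL}(2,q)$, acting on $\mathbb P^1(\mathbb F_q)$ by Möbius transformations, is a $3$-transitive subgroup of $S_d$ with $d=q+1$; and that it has order $(q+1)q(q-1)$.

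First I would fix the identification $I_d\cong\mathbb P^1(\mathbb F_q)=\mathbb F_q\cup\{\infty\}$. This requires $|\mathbb P^1(\mathbb F_q)|=q+1=d$. The action $z\mapsto \frac{\alpha z+\beta}{\gamma z+\delta}$ with $\alpha\delta-\beta\gamma\neq0$ gives a homomorphism $\mathrm{PGL}(2,q)\to S_d$. It is faithful, since a Möbius map fixing $0,1,\infty$ is the identity; so we may regard $G$ as a subgroup of $S_d$.

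Next I would verify sharp $3$-transitivity, which is the standard fact that requires an argument. Let $(z_1,z_2,z_3)$ be any triple of distinct points. The cross-ratio map
\[
z\mapsto \frac{(z-z_1)(z_2-z_3)}{(z-z_3)(z_2-z_1)}
\]
(with the usual conventions when some $z_i=\infty$) lies in $\mathrm{PGL}(2,q)$ and sends $z_1,z_2,z_3$ to $0,1,\infty$. Composing one such map with the inverse of another shows that any ordered triple of distinct points can be sent to any other, so $G$ is $3$-transitive. The only element fixing $(0,1,\infty)$ pointwise is the identity. The degenerate cases involving $\infty$ are the only mildly fiddly point, but they are classical and could simply be cited.

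Proposition~\ref{prop:3trans-suborbit}, applied to $x$ and $G$, then gives directly that $Y=G\cdot x$ is a simplex $3$-design. Here $d\ge3$ holds because $q\ge2$.

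For the size bound, $|Y|\le|G|$ by the orbit–stabilizer theorem. Sharp $3$-transitivity gives $|G|=|\Omega_3|=(d)_3=d(d-1)(d-2)$; equivalently, $|\mathrm{PGL}(2,q)|=(q^2-1)(q^2-q)/(q-1)$. Hence $|Y|\le d(d-1)(d-2)=\mathcal O(d^3)$.

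The main obstacle is only the $3$-transitivity check; everything else is immediate from the earlier results.
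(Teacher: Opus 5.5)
Your proposal is correct and follows the paper's proof: use the $3$-transitivity of $\mathrm{PGL}(2,q)$ on $\mathbb P^1(\mathbb F_q)$ to apply Proposition~\ref{prop:3trans-suborbit}, then bound $|Y|\le|G|=q(q^2-1)=d(d-1)(d-2)$. The only difference is that you also verify faithfulness and (sharp) $3$-transitivity via the cross-ratio map, which the paper simply takes as standard.
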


\begin{proof}
The natural action of $\mathrm{PGL}(2,q)$ on $\mathbb P^1(\mathbb F_q)$ is $3$-transitive.
Hence Proposition~\ref{prop:3trans-suborbit} applies and shows that $Y$ is a simplex $3$-design.
Moreover $|Y|\le |G|=q(q^2-1)=d(d-1)(d-2)$.
\end{proof}

\begin{proposition}\label{prop:tff3-size-d6}
Assume that $d\ge 3$ and that $d-1$ is a prime power.
Then one can construct a $\TFF_3$ on $G_{2,2d}$ with cardinality $\mathcal \order(d^{6})$.
\end{proposition}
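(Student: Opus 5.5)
We follow the pipeline of Figure~\ref{fig:pipeline} with $t=3$, taking $d' = d$ and replacing the full $S_d$-orbit by its thinned version. Since $d-1=q$ is a prime power and $d\ge 3$, we first need a point $x\in\Delta^{d-1}$ whose $S_d$-orbit is a simplex $3$-design.

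\textbf{Step 1: the simplex $3$-design $Y$.} For $d\ge 3$ with $d\neq 4$, Theorem~\ref{thm:simplex3design}(i) or (ii) gives $x$: case (i) for odd $d$, and case (ii) for even $d=2q'$ with $q'\ge 3$. For $d=4$ we use case (iii). Set $Y:=\mathrm{PGL}(2,q)\cdot x$, identifying $I_d$ with $\mathbb P^1(\mathbb F_q)$. By Corollary~\ref{cor:pgl-thinning-simplex3}, $Y$ is a simplex $3$-design with $|Y|\le d(d-1)(d-2)=\order(d^3)$.

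\textbf{Step 2: the projective toric $3$-design $X$.} Since $d-1$ is a prime power, Proposition~\ref{prop:IMAEG-poly-size}(1) with $t=3$ gives a projective toric $3$-design $X\subset P(\T^d)$ with
\[
|X|=1+(d-1)+(d-1)^2+(d-1)^3=\order(d^3).
\]

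\textbf{Step 3: combine and pass to the Grassmannian.} Proposition~\ref{prop:concatenation} shows that $Z:=\pi(Y\times X)\subset\CP^{d-1}$ is a complex projective $3$-design with $|Z|\le |Y||X|=\order(d^6)$. Proposition~\ref{prop:proj-to-ff} then produces a $\TFF_3$ $\mathcal F=\{\mathrm{span}_\R\{v,iv\}\mid [v]\in Z\}\subset G_{2,2d}$, and $|\mathcal F|\le|Z|=\order(d^6)$.

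\textbf{Main obstacle: multiplicities.} The only delicate point is how the designs are counted. The set $\pi(Y\times X)$, and likewise the frame $\mathcal F$, may contain coincident elements, in which case the design property refers to the multiset. The statements of Propositions~\ref{prop:concatenation} and~\ref{prop:proj-to-ff}, as used in Proposition~\ref{prop:cp-design-from-simplex-toric} and Theorem~\ref{thm:sph7-existence-general}, are taken with the same convention.

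The issue also arises inside Corollary~\ref{cor:pgl-thinning-simplex3}. There $Y$ must be read as the multiset orbit $\{g\cdot x\}_{g\in G}$, which is exactly what the averaging identity in Proposition~\ref{prop:3trans-suborbit} supports. This orbit-with-stabilizer counting is uniform, so passing to the underlying set would still give a design. Either way the cardinality bound $\order(d^3)\cdot\order(d^3)=\order(d^6)$ holds.

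Thus the substantive content is already contained in the earlier results. The proof amounts to checking the hypotheses (prime power, $d\ge 3$, the existence of $x$ in each parity case including $d=4$) and multiplying the two cardinality bounds.
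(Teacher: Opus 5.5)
Your proposal is correct and follows the paper's proof essentially verbatim. Like the paper, you thin the $S_d$-orbit from Theorem~\ref{thm:simplex3design} by $\mathrm{PGL}(2,q)$ via Corollary~\ref{cor:pgl-thinning-simplex3}, pair it with the projective toric $3$-design of Proposition~\ref{prop:IMAEG-poly-size}(1), and apply Propositions~\ref{prop:concatenation} and~\ref{prop:proj-to-ff} to get $|\mathcal F|\le |Y||X|=\order(d^6)$; your explicit check that every $d\ge 3$ is covered (including $d=4$ via case (iii)) and your remark on multiplicities are sound details the paper leaves implicit.
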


\begin{proof}
For the point $x\in\Delta^{d-1}$ used in
Theorem~\ref{thm:simplex3design},
the orbit $S_d\cdot x$ is a simplex $3$-design.
By Corollary~\ref{cor:pgl-thinning-simplex3}, there exists a simplex $3$-design $Y$ such that $|Y|=\order(d^3)$. By Proposition~\ref{prop:IMAEG-poly-size}~(1), choose a projective toric $3$-design
$X\subset P(\mathrm T^d)$ with $|X|=\mathcal \order(d^3)$.
Applying Proposition~\ref{prop:concatenation} to $Y$ and $X$, we obtain a complex projective $3$-design
\(
Z:=\pi(Y\times X)\subset \CP^{d-1}
\)
with $|Z|\le |Y||X|=\mathcal \order(d^6)$.
Finally, Proposition~\ref{prop:proj-to-ff} produces a $\TFF_3$ $\mathcal{F}\subset G_{2,2d}$ with $|\mathcal{F}|\le |Z|$,
hence $|\mathcal{F}|=\mathcal \order(d^6)$.
\end{proof}

\begin{remark}\label{rem:tff3-existence}
Proposition~\ref{prop:tff3-size-d6} gives an explicit construction of a $\TFF_3$ on $G_{2,2d}$ when $d-1$ is a prime power.
For every $d\ge 3$, one can still construct a $\TFF_3$ on $G_{2,2d}$ by combining the simplex $3$-designs in
Theorem~\ref{thm:simplex3design}
with the projective toric $3$-design in Proposition~\ref{prop:IMAEG-poly-size}(2),
and then applying Propositions~\ref{prop:concatenation} and \ref{prop:proj-to-ff}.
\end{remark}

\begin{theorem}\label{thm:sph7-size-d6}
Assume that $d\ge 3$ and that $d-1$ is a prime power.
Then one can construct a spherical $7$-design $X_d\subset S^{2d-1}$ with
\[
|X_d|=\mathcal{O}\!\left(d^6\right).
\]
\end{theorem}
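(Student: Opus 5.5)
The plan is to combine Proposition~\ref{prop:tff3-size-d6} with the lifting result Corollary~\ref{cor:lifting}, exactly as in the proof of Theorem~\ref{thm:sph7-existence-general}. The only new input is the thinned simplex design, and it has already been absorbed into the fusion frame.

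First, since $d\ge 3$ and $d-1$ is a prime power, Proposition~\ref{prop:tff3-size-d6} provides a tight $3$-fusion frame $\mathcal F\subset G_{2,2d}$ with $|\mathcal F|=\order(d^6)$. Explicitly, we take the simplex $3$-design $Y=\mathrm{PGL}(2,d-1)\cdot x$ from Corollary~\ref{cor:pgl-thinning-simplex3}, where $x$ is the point of Theorem~\ref{thm:simplex3design}. We pair it with the cyclic projective toric $3$-design $X$ of Proposition~\ref{prop:IMAEG-poly-size}~(1), for which $|X|=1+(d-1)+(d-1)^2+(d-1)^3$. We then form $Z=\pi(Y\times X)\subset\CP^{d-1}$ via Proposition~\ref{prop:concatenation} and pass to $\mathcal F$ via Proposition~\ref{prop:proj-to-ff}.

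One point needs checking. Theorem~\ref{thm:simplex3design} covers all $d\ge 3$: part (i) for odd $d$, part (ii) for even $d\ge 6$, and part (iii) for $d=4$. Hence a point $x$ whose $S_d$-orbit is a simplex $3$-design is available for every $d\ge3$. When $d$ is even and $d-1$ is a prime power, $d-1$ is an odd prime power. The case $d=4$, where $d-1=3$, is handled by (iii) just as well.

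Second, for each $V\in\mathcal F$ we let $Y_V\subset S(V)\simeq S^1$ be the vertex set of a regular octagon. This is a spherical $7$-design on $S^1$ by \cite{DGS1977}, and $|Y_V|=8$ is independent of $V$. Applying Corollary~\ref{cor:lifting} with $t=3$ and $s=7$, we obtain rotations $g_V\in O(V)$ such that $X_d:=\bigsqcup_{V\in\mathcal F} g_VY_V$ is a spherical $\min\{7,2\cdot3+1\}=7$-design on $S^{2d-1}$. Its cardinality is
\[
|X_d|=8|\mathcal F|\le 8|Y||X|\le 8\,d(d-1)(d-2)\bigl(1+(d-1)+(d-1)^2+(d-1)^3\bigr)=\order(d^6).
\]

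There is essentially no obstacle, since all the work sits in the earlier results. The only step needing care is the applicability of Proposition~\ref{prop:tff3-size-d6} for every admissible $d$, in particular small cases such as $d=3$ and $d=4$. For these I would check that a simplex $3$-design orbit from Theorem~\ref{thm:simplex3design} exists and that $\mathrm{PGL}(2,q)$ is $3$-transitive on $\mathbb P^1(\mathbb F_q)$ even for $q=2,3$, which is standard. The disjointness of the lifted sets is guaranteed by Corollary~\ref{cor:lifting}, so the count is exact up to the bound $|\mathcal F|\le|Z|$.
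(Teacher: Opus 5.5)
Your proposal is correct and follows the paper's proof exactly: you take the $\TFF_3$ $\mathcal F\subset G_{2,2d}$ with $|\mathcal F|=\order(d^6)$ from Proposition~\ref{prop:tff3-size-d6}, place a regular octagon on each $S(V)$, and apply Corollary~\ref{cor:lifting} to obtain a $7$-design with $8|\mathcal F|$ points. Your extra checks are accurate but already implicit in the cited results: the coverage of $d=3,4$ by Theorem~\ref{thm:simplex3design}, the $3$-transitivity of $\mathrm{PGL}(2,q)$ for $q=2,3$, and the explicit bound $8\,d(d-1)(d-2)\bigl(1+(d-1)+(d-1)^2+(d-1)^3\bigr)$.
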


\begin{proof}
By Proposition~\ref{prop:tff3-size-d6}, we obtain a $\TFF_3$
$\mathcal{F}\subset G_{2,2d}$ with $|\mathcal{F}|=\mathcal{O}(d^6)$.
For each $V\in\mathcal{F}$, let $Y_V\subset S(V)\simeq S^1$ be the vertex set of a regular octagon.
By Corollary~\ref{cor:lifting}, we obtain a spherical $7$-design $X_d\subset S^{2d-1}$ with \(
|X_d|=|Y_V|\,|\mathcal{F}|=8|\mathcal{F}|=\mathcal{O}(d^6).
\)
\end{proof}

\subsection*{Acknowledgements}

The author would like to express his sincere gratitude to Professor Akihiro Munemasa for his guidance and encouragement.
He also thanks Ayodeji Lindblad for his valuable comments and helpful discussions, particularly regarding constructions of designs through projective and Hopf maps.


\end{document}